\newtheorem{theorem}{Theorem}[section]
\newtheorem{corollary}[theorem]{Corollary}
\newtheorem{prop}[theorem]{Proposition}
\newtheorem{lemma}[theorem]{Lemma}
\theoremstyle{remark}
\theoremstyle{definition}
\numberwithin{equation}{section}
\numberwithin{theorem}{section}
\newcommand{\alexc}{{\mathcal A}_c}
\newcommand{\N}{{\mathbb N}}
\newcommand{\R}{{\mathbb R}}
\newcommand{\fn}{\!:\!}
\providecommand{\abs}[1]{\lvert#1\rvert}
\providecommand{\norm}[1]{\lVert#1\rVert}
\newcommand{\intab}{\int_a^b}
\newcommand{\intone}{\int_{-1}^1}
\newcommand{\polym}{{\mathcal P}_m}
\newcommand{\polyn}{{\mathcal P}_n}
\newcommand{\poly}[1]{{\mathcal P}_{#1}}
\newcommand{\fdn}{f^{(n)}}
\newcommand{\phin}{\phi_n}
\newcommand{\psim}{\psi_m}
\newcommand{\phint}{{\tilde \phi_n}}
\newcommand{\Knpt}{{\tilde K_{n,p}}}
\begin{document}
\subjclass{Primary 26D15, 41A55, 65D30. Secondary 26A39, 46F10}
\keywords{numerical integration, quadrature, corrected trapezoidal rule,
Lebesgue space, Henstock--Kurzweil
integral, Alexiewicz norm, continuous primitive integral}
\date{Preprint April 14, 2016.  To appear in {\it Journal of Classical
Analysis}.}
\title[Corrected trapezoidal rules]
{Higher order corrected trapezoidal rules in Lebesgue and Alexiewicz spaces}
\author{Erik Talvila}
\address{Department of Mathematics \& Statistics\\
University of the Fraser Valley\\
Abbotsford, BC Canada V2S 7M8}
\email{Erik.Talvila@ufv.ca}
%\thanks{Written while visiting the Department of Mathematics,
%University of Arizona.  Thank you for the generous hospitality.}

\begin{abstract}
If $f\!:\![a,b]\to\R$ such that $f^{(n)}$ is integrable then integration
by parts gives the formula
\begin{align*}
&\intab f(x)\,dx  =  \\
&\frac{(-1)^n}{n!}\sum_{k=0}^{n-1}(-1)^{n-k-1}\left[
\phi_n^{(n-k-1)}(a)f^{(k)}(a)- \phi_n^{(n-k-1)}(b)f^{(k)}(b)\right]
+E_n(f),
\end{align*}
where $\phi_n$ is a monic polynomial of degree $n$ and the error is
given by
$E_n(f)=\frac{(-1)^n}{n!}\int_a^b f^{(n)}(x)\phi_n(x)\,dx$.   This then
gives a quadrature formula for $\int_a^bf(x)\,dx$.  The polynomial
$\phi_n$ is chosen to optimize the error estimate under the assumption
that $f^{(n)}\in L^p([a,b])$ for some $1\leq p\leq\infty$ or if
$f^{(n)}$ is integrable in the distributional or Henstock--Kurzweil sense.
Sharp error estimates are obtained.
It is shown that this formula is exact for all such $\phi_n$ if $f$ is a polynomial of
degree at most $n-1$.  If $\phi_n$ is a Legendre polynomial then the
formula is exact for $f$ a polynomial of degree at most $2n-1$.
\end{abstract}

\maketitle

\section{Introduction}\label{sectionintroduction}
This paper is based on the following observation.  Suppose we wish
to approximate the integral $\intab f(x)\,dx$.  If the $n$th derivative
of function $f$ is integrable then repeated integration by parts yields
the formula,
\begin{eqnarray}
\intab f(x)\,dx & = & \frac{(-1)^n}{n!}\sum_{k=0}^{n-1}(-1)^{n-k-1}\left[
\phi_n^{(n-k-1)}(a)f^{(k)}(a)- \phi_n^{(n-k-1)}(b)f^{(k)}(b)\right]\notag\\
 & & \qquad+E_n(f),
\end{eqnarray}
where $\phi_n$ is a monic polynomial of degree $n$ and 
$E_n(f)=\frac{(-1)^n}{n!}\intab f^{(n)}(x)\phi_n(x)\,dx$.  This then
gives a quadrature formula for the integral of $f$ with error term
$E_n(f)$.  

If $f^{(n)}\in L^p([a,b])$ then the H\"older inequality
gives the error estimate
$\abs{E_n(f)}\leq\norm{f^{(n)}}_p\norm{\phi_n}_q/n!$, where $p$ and $q$ 
are conjugate exponents.  (If $p,q\in(1,\infty)$ then $p^{-1}+q^{-1}=1$.
If $p=1$ then $q=\infty$.  If $p=\infty$ then $q=1$.  Hence, we define
$1/\infty=0$.)  A natural question
is then how to choose $\phi_n$ to minimize this error.  

For $n=1$ this
problem is completely solvable.  It is easy to see that the unique
minimizing polynomial is $\phi_1(x)=x-c$, where $c$ is the midpoint of
$[a,b]$. See Corollary~\ref{corollaryn1}.  The case $n=2$ was considered in
\cite{talvilawiersmajmi}.  It was shown there that the polynomial
that minimizes $\norm{\phi_2}_q$ is unique.  An explicit minimizing polynomial 
was found
for $p=1,2,4/3, \infty$.  This gave sharp estimates on the error for
these values of $p$, which improved on a number of error estimates 
that appear in the literature.  For other values of $p\in[1,\infty]$ good
estimates were found for the minimizing value of $\norm{\phi_2}_q$.  
See also \cite{krylov}.

In
the present paper we examine the case $n\geq 3$.  The minimizing
polynomial is shown to be unique and to have $n$ simple zeros in $[a,b]$.  
When $p=1$ it is the Chebychev polynomial of the first kind, $T_n$.
When $p=2$ it is the Legendre polynomial $P_n$.
When $p=\infty$ it is the Chebychev polynomial of the second kind, $U_n$.
Each of these is multiplied by a normalising factor so as to have leading 
coefficient one.  These polynomials are orthogonal on $[-1,1]$.  Each of
our polynomials is composed with a linear function that maps
the interval $[a,b]$ onto the interval $[-1,1]$.

We also consider the case when $\intab\fdn(x)\,dx$ exists as a
Henstock--Kurzweil integral.  This allows conditional convergence in the
error term and
includes the case of convergence
as an improper Riemann integral or as a Cauchy--Lebesgue integral.  A
suitable norm is then the Alexiewicz norm, given as
$\norm{g}=\sup_{a\leq x\leq b}\abs{\int_a^xg(t)\,dt}$.
The polynomial that minimizes the error is again shown to be 
unique and to have $n$ simple zeroes in $[a,b]$.  It is given by
$\phi_n(x)=2^{1-n}(T_n(x)-1)$, suitably modified by a linear transformation
as above.  The same formulas hold when $f^{(n-1)}$ is merely assumed
to be continuous.  Then $\fdn$ exists as a distribution and the error
integral exists as a continuous primitive integral.  For a discussion of the Henstock--Kurzweil integral and
Alexiewicz norm, see \cite{lee} or \cite{swartz}.  The continuous
primitive integral is discussed in \cite{talviladenjoy}.

The final section of the paper discusses the degree of exactness.
If $f$ is a polynomial of  degree at most $n-1$ then $E_n(f)=0$ for
all $\phin\in\polyn$.  If $\phin$ is a normalized Legendre polynomial of
degree $n$,
composed with a linear transformation as above, then $E_n(f)=0$ for all
polynomials $f$ of degree at most $2n-1$.

Several other authors have considered modified trapezoidal rules under
conditions on $\fdn$.  Cerone and Dragomir \cite{ceronedragomirtrap} 
assume $\fdn\in L^p$ and obtain formulas like \eqref{basic} but with
larger error coefficients than in this theorem or in Corollaries~
\ref{corollaryone}, \ref{corollarytwo}, \ref{corollaryinfinity}.  Similarly
with Dedi\'{c},  Mati\'{c} and  Pe\v{c}ari\'{c} in \cite{dedicmaticpecaric}.
Liu \cite{liu} assumes the condition $f^{(n-1)}\in C([a,b])
\cap BV([a,b])$ and has a quadrature formula with degree of exactness equal
to $n-1$.  The problem is tackled using the Peano kernel by Dubeau 
\cite{dubeau} and Pe\v{c}ari\'{c} and Ujevi\'{c} \cite{pecaricujevic}.
Ding, Ye and Yang \cite{dingyeyang} estimate the remainder when $f''$ is Henstock--Kurzweil 
integrable.

\section{$\fdn\in L^p([a,b])$}
Let $\polym$ denote the monic polynomials of degree $m$.
For $1\leq p<\infty$ let $L^p([a,b])$ be the Lebesgue measurable
functions such that $\norm{g}_p=(\intab \abs{g(x)}^p\,dx)^{1/p}<\infty$.
Let $L^\infty([a,b])$ be the essentially bounded functions, with
norm $\norm{g}_\infty={\rm ess\, sup}_{x\in[a,b]}\abs{g(x)}$.  All
measure-theoretic statements are with respect to Lebesgue measure.
\begin{theorem}\label{theoremapprox}
Let $n\in\N$.  Let $p,q\in[1,\infty]$ be conjugate exponents.
Let $f\fn[a,b]\to\R$ such that $f^{(n-1)}$ is absolutely continuous and
$\fdn\in L^p([a,b])$.  Let $\phin\in\polyn$.  Write
\begin{eqnarray}
\intab f(x)\,dx & = & \frac{(-1)^n}{n!}\sum_{k=0}^{n-1}(-1)^{n-k-1}\left[
\phi_n^{(n-k-1)}(a)f^{(k)}(a)- \phi_n^{(n-k-1)}(b)f^{(k)}(b)\right]\notag\\
 & & \qquad+E_n(f),\label{basic}
\end{eqnarray}
where
$E_n(f)=\frac{(-1)^n}{n!}\intab f^{(n)}(x)\phi_n(x)\,dx$.
Then 
\begin{equation}
\abs{E_n(f)}\leq \frac{\norm{\fdn}_p\norm{\phi_n}_q}{n!}
\leq K_{n,p}\norm{\fdn}_p(b-a)^{n+1/q}\label{errorestimate}
\end{equation}
for a constant $K_{n,p}$ that depends on $n$, $p$ and $\phin$ but is independent of
$f$ and $b-a$.
There is a unique polynomial $\phint\in\polyn$ that minimizes $K_{n,p}$.
The estimate on $\abs{E_n(f)}$ is then sharp in the sense that the
coefficient of $\norm{\fdn}_p$ cannot be reduced.
The minimum value of $K_{n,p}$ is $\Knpt=2^{-n-1/q}\norm{\phint}_q/n!$
where the norm of $\phint$ is taken over $[-1,1]$.
\end{theorem}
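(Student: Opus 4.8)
The plan is to treat the statement in four stages: the identity together with the first inequality, the scaling that produces the constant $K_{n,p}$, the existence and uniqueness of the optimal polynomial, and finally the sharpness. First I would verify \eqref{basic} by integrating $E_n(f)$ by parts $n$ times; at each stage the boundary terms produce one summand of the stated sum, using that $\phi_n^{(n)}\equiv n!$ since $\phin$ is monic of degree $n$. Because $f^{(n-1)}$ is absolutely continuous and $\fdn\in L^p\subseteq L^1([a,b])$, each integration-by-parts step is legitimate. The first inequality in \eqref{errorestimate} is then immediate from Hölder's inequality applied to $\intab\fdn\phi_n$.

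To isolate the dependence on $b-a$, I would substitute $x=\frac{a+b}{2}+\frac{b-a}{2}t$, which carries a monic $\phin\in\polyn$ on $[a,b]$ to $\psi(t)=(2/(b-a))^n\phin(x(t))$, again monic of degree $n$ on $[-1,1]$. A direct change of variables gives $\norm{\phi_n}_q=(\tfrac{b-a}{2})^{n+1/q}\norm{\psi}_q$ with the latter norm over $[-1,1]$, so that $K_{n,p}=2^{-n-1/q}\norm{\psi}_q/n!$, which is manifestly independent of $b-a$. This reduces the minimization of $K_{n,p}$ over $\polyn$ on $[a,b]$ to minimizing $\norm{\psi}_q$ over monic polynomials of degree $n$ on $[-1,1]$, and it fixes the stated value of $\Knpt$ once the minimizer is identified.

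The heart of the argument is existence and uniqueness of the minimizing $\psi$. Writing $\psi(t)=t^n-r(t)$ with $r$ ranging over the $n$-dimensional space of polynomials of degree at most $n-1$, this is exactly the problem of best $L^q([-1,1])$ approximation of $t^n$ from that subspace. Existence follows from finite-dimensionality and coercivity of the norm. For uniqueness I would split into cases: when $1<q<\infty$ the space $L^q([-1,1])$ is strictly convex, so the best approximation from a subspace is unique. The endpoint cases are the main obstacle, since strict convexity fails there. I expect to invoke the fact that $\{1,t,\dots,t^{n-1}\}$ is a Chebyshev (Haar) system on $[-1,1]$: for $q=\infty$ (that is, $p=1$) uniqueness is the Chebyshev alternation theorem, and for $q=1$ (that is, $p=\infty$) it is the classical uniqueness theorem for best $L^1$ approximation from a Haar system. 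This also produces the explicit minimizers $T_n$, $P_n$, $U_n$ recorded in the later corollaries.

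Finally, sharpness. For $1\leq p<\infty$ I would note that Hölder's inequality is attained by choosing $\fdn$ so that $\abs{\fdn}^p$ is proportional to $\abs{\phint}^q$ with $\fdn$ matching the sign of $\phint$; any such $L^p$ function is realizable as an $n$th derivative by integrating $n$ times, which yields an admissible $f$ with $f^{(n-1)}$ absolutely continuous. For $p=\infty$ one takes $\fdn=\sgn(\phint)$ exactly, and for $p=1$ one uses a sequence of $L^1$ functions concentrating near a point where $\abs{\phint}$ attains its maximum. In every case the ratio $\abs{E_n(f)}/\norm{\fdn}_p$ attains or approaches $\norm{\phint}_q/n!$, so the coefficient of $\norm{\fdn}_p$ cannot be reduced.
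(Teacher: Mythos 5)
Your proposal is correct and follows the paper's proof in its overall structure (integration by parts, H\"older, the linear change of variables producing the factor $(b-a)^{n+1/q}$ and the formula $\Knpt=2^{-n-1/q}\norm{\phint}_q/n!$, and sharpness via exact equality cases for $1<p\leq\infty$ plus a concentrating sequence for $p=1$), but you handle the existence--uniqueness step by a genuinely different route. The paper defers this to its Lemma~\ref{lemma}, proved in \cite{talvilawiersmajmi} by direct perturbation arguments that simultaneously establish extra structure (the minimizer has $n$ simple zeros in $[a,b]$ and definite parity about the midpoint, facts the paper later exploits for the composite rules). You instead recast the problem as best $L^q([-1,1])$-approximation of $t^n$ from the span of $\{1,t,\dots,t^{n-1}\}$ and invoke strict convexity of $L^q$ for $1<q<\infty$, the Chebyshev--Haar uniqueness theorem for $q=\infty$, and Jackson's uniqueness theorem for $L^1$-approximation of continuous functions from a Haar subspace for $q=1$; this is the classical path, it is entirely adequate for the theorem as stated, and it buys a shorter argument at the cost of not delivering the zero-location and parity information. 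Two small points to tidy: your opening sentence of the sharpness paragraph asserts attainment of H\"older for all $1\leq p<\infty$, but for $p=1$ (where $q=\infty$) exact attainment generally fails because $\abs{\phint}$ achieves $\norm{\phint}_\infty$ only at finitely many points --- you do then give the correct $\delta$-sequence argument, so restrict the attainment claim to $1<p<\infty$; and when the maximum of $\abs{\phint}$ occurs only at an endpoint $\pm 1$ you should note (as the paper does) that one concentrates at nearby interior points $\beta$ with $\norm{\phint}_\infty<\varepsilon+\abs{\phint(\beta)}$, or supports the bumps inside the interval.
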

\begin{proof}
Integration by parts establishes \eqref{basic}.  The H\"older inequality
gives \eqref{errorestimate}.
Define $\phint\in\polyn$ by
$\phin(x)=[(b-a)/2]^n\phint([2x-a-b]/[b-a])$.
And write $\norm{\phint}_q=(\int_{-1}^1\abs{\phint(x)}^q\,dx)^{1/q}$
for $1\leq q<\infty$ and $\norm{\phint}_\infty=\max_{\abs{x}\leq 1}
\abs{\phint(x)}$.
Then for $1\leq q<\infty$,
$$
\norm{\phin}_q =
\left(\frac{b-a}{2}\right)^{n}
\left(\intab
%\abs{\phint([2x-a-b]/[b-a])}^qdx\right)^{1/q}=
\left|\phint\left(\frac{2x-a-b}{b-a}\right)\right|^q\,dx\right)^{1/q}=
\left(\frac{b-a}{2}\right)^{n+1/q}\norm{\phint}_q.
%\left(\int_{a}^b
%\abs{\phin(x)}^qdx\right)^{1/q}.
$$
Similarly when $q=\infty$.
This produces the factor $(b-a)^{n+1/q}$ in \eqref{errorestimate}.

Existence of a unique minimizing polynomial for $\norm{\phi_n}_q$ is
proved in Lemma~\ref{lemma}.

To show the coefficient of $\norm{\fdn}_p$ in \eqref{errorestimate}
 cannot be made any smaller,
note that
for $1<p<\infty$ there is equality in the H\"older inequality when
$\fdn(x)=d\,{\rm sgn}[\phin(x)]\abs{\phin(x)}^{1/(p-1)}$ for some
$d\in\R$ and almost all $x\in[a,b]$.  See \cite[p.~46]{liebloss}.
Integrate to get
\begin{eqnarray*}
f(x) & = & d\int_a^x\cdots\int_a^{x_{i+1}}\cdots\int_a^{x_2}{\rm sgn}
[\phin(x_1)]\abs{\phin(x_1)}^{1/(p-1)}dx_1\cdots dx_i\cdots dx_n\\
 & = & \frac{d}{(n-1)!}\int_a^x(x-t)^{n-1}{\rm sgn}[\phin(t)]
\abs{\phin(t)}^{1/(p-1)}\,dt,
\end{eqnarray*}
modulo a polynomial of degree at most $n-1$.  When $p=\infty$ the
condition for equality in the H\"older inequality is that
$\fdn(x)=d\,{\rm sgn}[\phin(x)]$ for some
$d\in\R$ and almost all $x\in[a,b]$.  See \cite[p.~46]{liebloss}.  We
can integrate as before to get
$$
f(x)=\frac{d}{(n-1)!}\int_a^x(x-t)^{n-1}{\rm sgn}[\phin(t)]\,dt,
$$
modulo a polynomial of degree at most $n-1$.  

When $p=1$ the
condition for equality in the H\"older inequality,
$\abs{\intone \fdn\phin}=\norm{\fdn}_1\norm{\phin}_\infty$, is that
$\phin(x)=d\,{\rm sgn}[\fdn(x)]$  for some
$d\in\R$ and almost all $x\in[-1,1]$.  See \cite[p.~46]{liebloss}.
(Because of the scaling argument above, it suffices to work on
$[-1,1]$.)
In general, this condition cannot be satisfied.
Take $\alpha\in[-1,1]$ such that $\norm{\phin}_\infty=\abs{\phin(\alpha)}$.
Let $\delta$ be the Dirac distribution.  If $\alpha\in(-1,1)$ and
$\fdn(x)=\delta(x-\alpha)$ then
$\intone\fdn(x)\phin(x)\,dx=\phin(\alpha)$.  Now,
$\delta(x-\alpha)\not\in L^1([-1,1])$ so use a $\delta$-sequence.  Let
$\psim\fn[-1,1]\to[0,\infty)$ be continuous with support in 
$(\alpha-1/m,\alpha+1/m)$
such that $\int_{\alpha-1/m}^{\alpha+1/m}\psim(x)\,dx=1$.  Let
\begin{eqnarray*}
f_m(x) & = & \int_{-1}^x\cdots\int_{-1}^{x_{i+1}}\cdots\int_{-1}^{x_2}
\psim(x_1)\,dx_1\cdots dx_i\cdots dx_n\\
 & = & \frac{1}{(n-1)!}\int_{-1}^x(x-t)^{n-1}\psim(t)\,dt.
\end{eqnarray*}
Then $f_m\in L^1([-1,1])$.  Note that
$$
\norm{f_m^{(n)}}_1=\intone\left|f_m^{(n)}(x)\,dx\right|=\intone\abs{\psim(x)}
\,dx=1.
$$
And, since $\phin$ is continuous, we get
$$
\lim_{m\to\infty}\left|\intone f_m^{(n)}(x)\phin(x)\,dx\right|
=\abs{\phin(\alpha)}=\norm{\phin}_\infty,
$$
thus showing that the coefficient of $\norm{\fdn}_1$ in \eqref{errorestimate}
cannot be
reduced.  If $\abs{\alpha}=1$ then for each $\varepsilon>0$ there is
$\beta\in(-1,1)$ such that $\norm{\phin}_\infty<\varepsilon+\abs{\phin(\beta)}$.
\end{proof}
Now we look at some special cases that can be solved completely.

When $n=1$ we get the usual trapezoidal rule.  See \cite{cruzuribeJIPAM}.
\begin{corollary}\label{corollaryn1}
If $n=1$ the approximation becomes $\intab f(x)\,dx\doteq
\frac{b-a}{2}\left[f(a)+f(b)\right]$ with sharp error
estimate 
$$
\abs{E_1(f)}  \leq  \left\{\begin{array}{cl}
\frac{1}{2}\norm{f'}_1(b-a), & p=1\\
\frac{1}{2}\left(\frac{1}{q+1}\right)^{1/q}\norm{f'}_p
(b-a)^{1+1/q}, & 
1<p<\infty\\
\frac{1}{4}\norm{f'}_\infty(b-a)^2, & p=\infty.
\end{array}
\right.
$$
\end{corollary}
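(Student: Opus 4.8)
The plan is to specialize Theorem~\ref{theoremapprox} to $n=1$ and then carry out the resulting one-variable minimization and norm computations explicitly. First I would reduce to the standard interval: by the scaling relation established in the proof of the theorem, namely $\phi_1(x)=[(b-a)/2]\phint([2x-a-b]/[b-a])$ and $\norm{\phi_1}_q=[(b-a)/2]^{1+1/q}\norm{\phint}_q$, minimizing $\norm{\phi_1}_q$ over the monic linear polynomials $\poly{1}$ is equivalent to minimizing $\norm{\phint}_q$ over the monic linear polynomials on $[-1,1]$. Any such polynomial is $\phint(x)=x-c$ with $c\in\R$, so the task becomes minimizing $\norm{x-c}_{L^q([-1,1])}$ over $c$.

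Next I would identify the minimizer as $c=0$. For $1\le q<\infty$ the map $c\mapsto\int_{-1}^1\abs{x-c}^q\,dx$ is even in $c$ (the interval is symmetric about the origin) and strictly convex, so it has its unique minimum at $c=0$; for $q=\infty$ the quantity $\max_{\abs{x}\le1}\abs{x-c}$ is likewise minimized uniquely at $c=0$, with value $1$. Hence $\phint(x)=x$, and undoing the linear change of variables gives $\phi_1(x)=x-(a+b)/2$, so $c$ is the midpoint of $[a,b]$ as asserted in Section~\ref{sectionintroduction}. Substituting this $\phi_1$ into \eqref{basic} with $n=1$, and using $\phi_1(a)=-(b-a)/2$ and $\phi_1(b)=(b-a)/2$, collapses the boundary sum to $\frac{b-a}{2}[f(a)+f(b)]$, the trapezoidal rule.

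Then I would evaluate the norms of $\phint(x)=x$ on $[-1,1]$: one has $\norm{\phint}_\infty=1$ and, for $1\le q<\infty$, $\norm{\phint}_q=(\int_{-1}^1\abs{x}^q\,dx)^{1/q}=(2/(q+1))^{1/q}$. Feeding these into the theorem's formula $\Knpt=2^{-1-1/q}\norm{\phint}_q$ and simplifying the powers of $2$ gives $\tilde K_{1,1}=1/2$ (from $q=\infty$), $\tilde K_{1,p}=\frac12(1/(q+1))^{1/q}$ for $1<p<\infty$, and $\tilde K_{1,\infty}=1/4$ (from $q=1$, where $\norm{\phint}_1=1$). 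Multiplying each by $\norm{f'}_p(b-a)^{1+1/q}$ and noting $1+1/q=1$ when $q=\infty$ and $1+1/q=2$ when $q=1$ reproduces the three displayed bounds.

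Sharpness is then inherited directly from Theorem~\ref{theoremapprox}, which asserts that for the optimal $\phint$ the coefficient of $\norm{f^{(n)}}_p$ cannot be reduced. I do not expect any genuine obstacle here, since the corollary is essentially a computation; the only step needing a short argument of its own is the uniqueness of the minimizer $c=0$, for which I rely on symmetry together with strict convexity in $c$ rather than on the abstract existence--uniqueness assertion of the theorem, which does not by itself exhibit the optimal polynomial.
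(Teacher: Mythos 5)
Your proposal is correct and follows essentially the same route as the paper, which simply states that the minimizing polynomial is $\phi_1(x)=x-c$ with $c$ the midpoint of $[a,b]$ and lets Theorem~\ref{theoremapprox} (with $\norm{\phint}_q=(2/(q+1))^{1/q}$, $\norm{\phint}_\infty=1$) deliver the three bounds. The only difference is that you spell out the computations and supply a direct symmetry-plus-convexity argument for the uniqueness of $c=0$, which the paper leaves implicit via Lemma~\ref{lemma}; your numbers $\tilde K_{1,1}=1/2$, $\tilde K_{1,p}=\frac{1}{2}(1/(q+1))^{1/q}$, $\tilde K_{1,\infty}=1/4$ all check out.
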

\begin{proof}
The minimizing polynomial is $\phi_1(x)=x-c$, where $c$ is the
midpoint of $[a,b]$.
\end{proof}

The case $n=2$ is discussed in detail in \cite{talvilawiersmajmi}, where
$\phi_2$ and
the exact values of ${\tilde K_{2,p}}$ are found for $p=1,2, 4/3, \infty$.

As mentioned in the Introduction, the unique minimizing polynomial
for $\norm{\phin}_p$ is known when $p=1,2, \infty$.  For these cases
we can get an explicit form of the approximation to the integral that
minimizes the error and compute the exact value of $\Knpt$
from Theorem~\ref{theoremapprox}.  
Since $[a,b]$ is a compact interval we have
$L^\infty([a,b])\subseteq L^s([a,b])\subseteq L^r([a,b])\subseteq L^1([a,b])$ if $1\leq r
\leq s\leq\infty$.  The estimate for $p=1$ 
then applies when
$f\in L^r([a,b])$ for $1\leq r\leq \infty$.
The estimate for $p=2$ applies when
$f\in L^r([a,b])$ for $2\leq r\leq \infty$.
\begin{corollary}\label{corollaryone}
If $\fdn\in L^r([a,b])$ for some $1\leq r\leq \infty$ then 
\begin{align}
&\intab f(x)\,dx  \doteq\label{expansion1}\\
\frac{1}{(n-1)!}\sum_{k=0}^{n-1}&\left[
\frac{(b-a)^{k+1}(2n-k-2)!(n-k-1)!}{2^{2k+1}(2n-2k-2)!(k+1)!}\right]
\left[
f^{(k)}(a)+(-1)^{k}f^{(k)}(b)\right]\notag
\end{align}
with sharp error estimate
\begin{equation}
\abs{E_n(f)}\leq \frac{\norm{\fdn}_1(b-a)^n}{2^{2n-1}n!}.\label{errorone}
\end{equation}
\end{corollary}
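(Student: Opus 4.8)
The plan is to specialize Theorem~\ref{theoremapprox} to $p=1$, for which the conjugate exponent is $q=\infty$, and to identify the optimal polynomial explicitly. Minimizing $\norm{\phint}_\infty$ over monic polynomials of degree $n$ on $[-1,1]$ is the classical Chebyshev extremal problem, whose solution is the normalized Chebyshev polynomial $\phint=2^{1-n}T_n$, with minimum value $\norm{\phint}_\infty=2^{1-n}$. Substituting into the formula $\Knpt=2^{-n-1/q}\norm{\phint}_q/n!$ from Theorem~\ref{theoremapprox} and using $1/q=0$ gives
\begin{equation*}
\Knpt=\frac{2^{-n}\cdot 2^{1-n}}{n!}=\frac{1}{2^{2n-1}n!}.
\end{equation*}
By \eqref{errorestimate} this immediately yields the sharp bound \eqref{errorone}.

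Next I would compute the quadrature coefficients in \eqref{expansion1}. Writing $\phin(x)=[(b-a)/2]^n\phint(u)$ with $u=(2x-a-b)/(b-a)$, the chain rule gives
\begin{equation*}
\phi_n^{(n-k-1)}(a)=\left(\frac{b-a}{2}\right)^{k+1}2^{1-n}T_n^{(n-k-1)}(-1),
\end{equation*}
and the analogous expression at $b$ with argument $+1$. The parity relation $T_n(-x)=(-1)^nT_n(x)$, differentiated $j$ times, gives $T_n^{(j)}(-1)=(-1)^{n-j}T_n^{(j)}(1)$, so with $j=n-k-1$ the two endpoint contributions collapse into the symmetric form $f^{(k)}(a)+(-1)^kf^{(k)}(b)$. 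The three sign factors---the $(-1)^{n-k-1}$ inside the sum in \eqref{basic}, the $(-1)^{k+1}$ produced by this combination, and the prefactor $(-1)^n/n!$---multiply to the positive coefficient $1/n!$.

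Finally I would evaluate $T_n^{(j)}(1)$ using the classical formula $T_n^{(j)}(1)=\prod_{i=0}^{j-1}(n^2-i^2)/(2i+1)$. Rewriting $\prod_{i=0}^{j-1}(n^2-i^2)=[n!/(n-j)!]\cdot[(n+j-1)!/(n-1)!]$ and $\prod_{i=0}^{j-1}(2i+1)=(2j)!/(2^jj!)$, then substituting $j=n-k-1$ and collecting the powers of $2$, reduces the coefficient to exactly the bracketed expression in \eqref{expansion1}. The only genuine obstacle is bookkeeping: keeping the three sign factors and the factorial manipulation of the Chebyshev derivative straight, since the final constant is a ratio of several factorials that must collapse precisely.
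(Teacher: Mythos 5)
Your proposal is correct and follows essentially the same route as the paper: specialize Theorem~\ref{theoremapprox} to $p=1$, identify the minimizer as the normalized Chebyshev polynomial $2^{1-n}T_n$ with $\norm{\phint}_\infty=2^{1-n}$, inherit sharpness from the theorem, and use the parity relation $\phi_n^{(m)}(a)=(-1)^{n+m}\phi_n^{(m)}(b)$ to collapse the endpoint terms. The only cosmetic difference is that you evaluate $T_n^{(j)}(1)$ via the classical product formula $\prod_{i=0}^{j-1}(n^2-i^2)/(2i+1)$ while the paper routes through Gegenbauer polynomials; both yield $\phi_n^{(n-k-1)}(b)=\frac{(b-a)^{k+1}n(2n-k-2)!(n-k-1)!}{2^{2k+1}(k+1)!(2n-2k-2)!}$ and hence the identical coefficients in \eqref{expansion1}.
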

\begin{proof}
The unique polynomial minimizing $\norm{\cdot}_\infty$ over $\polyn$ on 
$[-1,1]$ 
is $\phint(x)=2^{1-n}T_n(x)$, where $T_n$ is the Chebyshev polynomial of
first type.  See \cite[p.~63]{cheney}, \cite[p.~39]{natansonI} or 
\cite[p.~31, 45]{rivlin}.  Since $T_n(\cos\theta)=\cos(n\theta)$ we have
$\norm{\phint}_\infty=2^{1-n}$.  The minimizing polynomial on $[a,b]$ is
then $\phi_n(x)=(b-a)^n2^{1-2n}T_n([2x-a-b]/[b-a])$.  From the proof of
the Theorem, the sharp error estimate is then
$$
\abs{E_n(f)}\leq\frac{\norm{\fdn}_1\norm{2^{1-n}T_n}_\infty}{n!}\left(
\frac{b-a}{2}\right)^n=\frac{\norm{\fdn}_1(b-a)^n}{2^{2n-1}n!}.
$$

To compute the expansion in \eqref{basic} we need the derivatives of
$\phi_n$.  We have, $\phi_n^{(m)}(x)=2^{1+m-2n}(b-a)^{n-m}T_n^{(m)}(
[2x-a-b]/[b-a])$.  Derivatives of $T_n$ can be computed in terms of
Gegenbauer polynomials $C_n^\lambda$.  See 
\cite[8.949.2, 8.937.4]{gradshteyn}.  Then
\begin{eqnarray*}
\phi_n^{(m)}(b) & = & 2^{1+m-2n}(b-a)^{n-m}n\,2^{m-1}(m-1)!C_{n-m}^{m}(1)\\
 & = & \frac{(b-a)^{n-m}n(n+m-1)!m!}{2^{2n-2m-1}(n-m)!(2m)!}.
\end{eqnarray*}
Since $T_n(-x)=(-1)^nT_n(x)$ get $\phi_n^{(m)}(a)=(-1)^{n+m}\phi_n^{(m)}(b)$.
This gives \eqref{expansion1}.
\end{proof}
\begin{corollary}\label{corollarytwo}
If $\fdn\in L^r([a,b])$ for some $2\leq r\leq\infty$ then 
\begin{align}
&\intab f(x)\,dx  \doteq\label{expansion2}\\
\frac{n!}{(2n)!}\sum_{k=0}^{n-1}&\left[
\frac{(b-a)^{k+1}(2n-k-1)!}{(n-k-1)!(k+1)!}\right]
\left[
f^{(k)}(a)+(-1)^{k}f^{(k)}(b)\right]\notag
\end{align}
with sharp error estimate
\begin{equation}
\abs{E_n(f)}\leq \frac{\norm{\fdn}_2n!(b-a)^{n+1/2}}{(2n+1)^{1/2}(2n!)}.
\label{errortwo}
\end{equation}
\end{corollary}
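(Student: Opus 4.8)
The plan is to mirror the proof of Corollary~\ref{corollaryone}, simply trading the $L^\infty$-extremal monic polynomial (Chebyshev $T_n$) for the $L^2$-extremal one. Since $p=2$ is self-conjugate we have $q=2$, and by the standard theory of orthogonal polynomials the unique monic polynomial of degree $n$ minimizing $\norm{\cdot}_2$ on $[-1,1]$ is the monic Legendre polynomial $\phint(x)=\frac{2^n(n!)^2}{(2n)!}P_n(x)$; this is exactly the statement that the monic orthogonal polynomial is the best $L^2$ approximation to $x^n$ from lower degree polynomials (see \cite{cheney}), and it agrees with the uniqueness already guaranteed by Lemma~\ref{lemma}.

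First I would compute the extremal norm. From the leading coefficient $\frac{(2n)!}{2^n(n!)^2}$ of $P_n$ together with the orthogonality relation $\int_{-1}^1 P_n(x)^2\,dx=2/(2n+1)$, I get $\norm{\phint}_2=\frac{2^n(n!)^2}{(2n)!}\sqrt{2/(2n+1)}$. Substituting into $\Knpt=2^{-n-1/q}\norm{\phint}_q/n!$ from Theorem~\ref{theoremapprox} with $q=2$, the powers of $2$ and one factor of $n!$ cancel, leaving ${\tilde K_{n,2}}=\frac{n!}{(2n)!\sqrt{2n+1}}$; multiplying by $\norm{\fdn}_2(b-a)^{n+1/2}$ reproduces \eqref{errortwo} exactly. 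Because $2\le r$ forces $L^r([a,b])\subseteq L^2([a,b])$ on a compact interval, the estimate applies for every $\fdn\in L^r$ with $2\le r\le\infty$.

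Next, to obtain the expansion \eqref{expansion2}, I would evaluate the endpoint derivatives of $\phin(x)=[(b-a)/2]^n\phint([2x-a-b]/[b-a])$. The chain rule gives $\phi_n^{(m)}(b)=[(b-a)/2]^{n-m}\phint^{(m)}(1)$ and $\phi_n^{(m)}(a)=[(b-a)/2]^{n-m}\phint^{(m)}(-1)$. The key input is the value of the Legendre derivatives at the endpoint, $P_n^{(m)}(1)=\frac{(n+m)!}{2^m m!(n-m)!}$, which can be read off from the Gegenbauer representation $P_n=C_n^{1/2}$ precisely as the Chebyshev derivatives were handled in Corollary~\ref{corollaryone} (see \cite{gradshteyn}). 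Inserting $m=n-k-1$ and letting the powers of $2$ telescope yields $\phi_n^{(n-k-1)}(b)=(b-a)^{k+1}\frac{(n!)^2(2n-k-1)!}{(2n)!(n-k-1)!(k+1)!}$, while the parity $P_n(-x)=(-1)^nP_n(x)$ gives $\phi_n^{(m)}(a)=(-1)^{n+m}\phi_n^{(m)}(b)$.

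Finally I would feed these into \eqref{basic}. The sign factor $(-1)^n(-1)^{n-k-1}$ from the formula combines with $(-1)^{n+m}=(-1)^{2n-k-1}$ coming from the parity relation (with $m=n-k-1$), so that the $a$- and $b$-contributions assemble into the symmetric combination $f^{(k)}(a)+(-1)^kf^{(k)}(b)$ carrying the positive coefficient $\phi_n^{(n-k-1)}(b)/n!$, which is exactly the coefficient displayed in \eqref{expansion2}. I expect the only genuinely delicate step to be this sign-and-factorial bookkeeping; the minimizing property and the norm computation follow at once from orthogonal-polynomial theory, and the real content is checking that the Legendre derivative values at $\pm1$ repackage cleanly into the stated rational coefficients.
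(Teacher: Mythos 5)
Your proposal is correct and follows essentially the same route as the paper: identify the monic Legendre polynomial $\phint(x)=2^n(n!)^2P_n(x)/(2n)!$ as the unique $L^2$-minimizer, get \eqref{errortwo} from $\int_{-1}^1P_n^2(x)\,dx=2/(2n+1)$ via the scaling in Theorem~\ref{theoremapprox}, and assemble \eqref{expansion2} from $P_n^{(m)}(1)=\frac{(n+m)!}{2^m m!(n-m)!}$ together with the parity relation $\phi_n^{(m)}(a)=(-1)^{n+m}\phi_n^{(m)}(b)$; your sign-and-factorial bookkeeping checks out. The only (immaterial) difference is that you obtain $P_n^{(m)}(1)$ from the Gegenbauer representation $P_n=C_n^{1/2}$, as the paper does for $T_n$ and $U_n$ in Corollaries~\ref{corollaryone} and \ref{corollaryinfinity}, whereas the paper uses the hypergeometric representation $P_n(x)={_2}F_1(-n,n+1;1;(1-x)/2)$ for this case.
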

\begin{proof}
The unique polynomial minimizing $\norm{\cdot}_2$ over $\polyn$ on 
$[-1,1]$ 
is $\phint(x)=2^{n}(n!)^2P_n(x)/(2n)!$, where $P_n$ is the Legendre 
polynomial.
See \cite[p.~109]{cheney}, \cite[p.~48]{natansonII} or 
\cite[p.~62]{rivlin}.  As in the proof of Corollary~\ref{corollaryone},
the error estimate follows from the integral
$\int_{-1}^1P_n^2(x)\,dx=2/(2n+1)$ \cite[22.2.10]{abramowitz}.

Derivatives of $P_n$ can be computed from the hypergeometric
representation $P_n(x)={_2}F_1(-n,n+1;1;(1-x)/2)$ 
\cite[22.5.49, 15.2.2]{abramowitz}.  We get
$$
P_n^{(m)}(x)= \frac{(-n)_m(n+1)_m(-1)^m}{(1)_m2^m}{_2}F_1(m-n,m+n+1;m+1;(1-x)/2),
$$
where $(a)_m$ is the Pochhammer symbol.  Since ${_2}F_1(a,b;c;0)=1$ we
get 
$$
P_n^{(m)}(1)= \frac{(n+m)!}{(n-m)!m!2^m}.
$$
This gives
$$
\phi_n^{(m)}(b)=\frac{(b-a)^{n-m}(n!)^2(n+m)!}{(2n)!(n-m)!m!}
$$
with $\phi_n^{(m)}(a)=(-1)^{n+m}\phi_n^{(m)}(b)$.  And, \eqref{expansion2}
follows.
\end{proof}
\begin{corollary}\label{corollaryinfinity}
If $\fdn\in L^\infty([a,b])$ then 
\begin{align}
&\intab f(x)\,dx  \doteq\label{expansioninfinity}\\
\frac{1}{n!}\sum_{k=0}^{n-1}&\left[
\frac{(b-a)^{k+1}(2n-k)!(n-k-1)!}{2^{2k+2}(2n-2k-1)!(k+1)!}\right]
\left[
f^{(k)}(a)+(-1)^{k}f^{(k)}(b)\right]\notag
\end{align}
with sharp error estimate
\begin{equation}
\abs{E_n(f)}\leq \frac{\norm{\fdn}_\infty (b-a)^{n+1}}{2^{2n}n!}.
\label{errorinfinity}
\end{equation}
\end{corollary}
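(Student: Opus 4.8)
The plan is to follow Corollaries~\ref{corollaryone} and~\ref{corollarytwo} essentially verbatim, now taking $q=1$. First I would record the classical $L^1$ extremal fact already anticipated in the Introduction: the unique monic polynomial of degree $n$ minimizing $\norm{\cdot}_1$ over $\polyn$ on $[-1,1]$ is $\phint(x)=2^{-n}U_n(x)$, with $U_n$ the Chebyshev polynomial of the second kind (cf.\ \cite{rivlin}, \cite{natansonII}). To extract the sharp constant I would compute $\norm{\phint}_1$ by the substitution $x=\cos\theta$, using $U_n(\cos\theta)=\sin((n+1)\theta)/\sin\theta$, so that
$$
\intone\abs{U_n(x)}\,dx=\int_0^\pi\abs{\sin((n+1)\theta)}\,d\theta=2,
$$
whence $\norm{\phint}_1=2^{1-n}$. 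Inserting this into $\Knpt=2^{-n-1/q}\norm{\phint}_q/n!$ from Theorem~\ref{theoremapprox} with $q=1$ gives $\Knpt=2^{-2n}/n!$, which is exactly the coefficient in \eqref{errorinfinity}.

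For the explicit expansion I would transplant to $[a,b]$ via $\phin(x)=(b-a)^n2^{-2n}U_n([2x-a-b]/[b-a])$ and then, as in Corollary~\ref{corollaryone}, obtain the endpoint derivatives from the Gegenbauer representation. Since $U_n=C_n^1$ and $\frac{d^m}{dx^m}C_n^\lambda=2^m(\lambda)_mC_{n-m}^{\lambda+m}$ \cite[8.949.2, 8.937.4]{gradshteyn}, one gets $U_n^{(m)}(x)=2^mm!\,C_{n-m}^{m+1}(x)$, and the value $C_k^\lambda(1)=(2\lambda)_k/k!$ yields the closed form
$$
U_n^{(m)}(1)=\frac{2^mm!\,(n+m+1)!}{(2m+1)!\,(n-m)!}.
$$
The chain rule then gives $\phin^{(m)}(b)$, and the parity $U_n(-x)=(-1)^nU_n(x)$ furnishes $\phin^{(m)}(a)=(-1)^{n+m}\phin^{(m)}(b)$. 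Substituting these into \eqref{basic} collapses each boundary pair into the symmetric combination $f^{(k)}(a)+(-1)^kf^{(k)}(b)$; setting $m=n-k-1$ and simplifying the factorials should reproduce the stated coefficient $\frac{(b-a)^{k+1}(2n-k)!(n-k-1)!}{2^{2k+2}(2n-2k-1)!(k+1)!}$ of \eqref{expansioninfinity}.

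The computation is entirely routine Pochhammer bookkeeping, mirroring Corollary~\ref{corollaryone}. The only steps wanting a little care are the evaluation of $U_n^{(m)}(1)$ through the Gegenbauer value formula and the sign accounting when reducing the boundary terms under the parity relation; neither introduces a genuinely new obstacle, so I expect no real difficulty beyond verifying that the factorials in $\phin^{(n-k-1)}(b)$ line up with the coefficient displayed in \eqref{expansioninfinity}.
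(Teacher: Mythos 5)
Your proposal is correct and follows the paper's own proof essentially verbatim: the same $L^1$-minimizer $\phint(x)=2^{-n}U_n(x)$ with $\norm{U_n}_1=2$ computed via $x=\cos\theta$, the same Gegenbauer evaluation $U_n^{(m)}(1)=2^m m!\binom{n+m+1}{n-m}$ (your Pochhammer form $2^m m!\,(n+m+1)!/[(2m+1)!(n-m)!]$ is identical), and the same parity reduction yielding the coefficients of \eqref{expansioninfinity}; sharpness is correctly inherited from Theorem~\ref{theoremapprox}, just as in the paper. No gaps.
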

\begin{proof}
The unique polynomial minimizing $\norm{\cdot}_1$ over $\polyn$ on 
$[-1,1]$ 
is $\phint(x)=2^{-n}U_n(x)$, where $U_n$ is the Chebyshev  
polynomial of second type.
See \cite[p.~222]{cheney}, \cite[p.~26]{golomb} or 
\cite[p.~72, 83]{rivlin}.
From the formula $U_n(\cos\theta)=\sin([n+1]\theta)/\sin\theta$ we can
directly compute 
$$
\norm{U_n}_1=\frac{1}{n+1}\int_0^{(n+1)\pi}\abs{\sin\theta}\,d\theta =2.
$$
Then $\norm{\phint}_1=2^{1-n}$ and the
error estimate \eqref{errorinfinity} follows as in the previous corollaries.

Derivatives of $U_n$ can be computed in terms of Gegenbauer polynomials
\cite[8.949.5, 8.937.4]{gradshteyn}.  We get
$$
U_n^{(m)}(1)=2^mm!\binom{n+m+1}{n-m}.
$$
And,
$$
\phi_n^{(m)}(b)=\frac{(b-a)^{n-m}m!(n+m+1)!}{2^{2n-2m}(2m+1)!(n-m)!}
$$
with $\phi_n^{(m)}(a)=(-1)^{n+m}\phi_n^{(m)}(b)$.  Expansion 
\eqref{expansioninfinity} now
follows.
\end{proof}

The minimizing polynomial for $\norm{\cdot}_p$ is even or odd about
the midpoint of $[a,b]$ as $n$ is even or odd.  See Lemma~\ref{lemma}.
We use this simplification when computing the composite rule for
uniform partitions.  The polynomial $\phint$ can be replaced with
any of the minimizing polynomials from Corollaries~\ref{corollaryone},
\ref{corollarytwo} or \ref{corollaryinfinity}.
\begin{corollary}\label{corollarycomposite}
Let $\phint\in\polyn$ such that $\phint$ is even or odd as $n$ is even or odd.
The composite rule for a uniform partition, $x_i=a+(b-a)i/m$, $0\leq i\leq m$, is
\begin{align}
&\intab f(x)\,dx  = \notag\\
&\frac{1}{n!}\sum_{\ell=0}^{\lfloor(n-1)/2\rfloor}\left(\frac{b-a}{2m}\right)^{2\ell+1}
\phint^{(n-2\ell-1)}(1)\left[f^{(2\ell)}(a)+f^{(2\ell)}(b)+2\sum_{i=1}^{m-1}f^{(2\ell)}(x_i)
\right]\label{sum1}\\
&+\frac{1}{n!}\sum_{\ell=1}^{\lfloor n/2\rfloor}\left(\frac{b-a}{2m}\right)^{2\ell}
\phint^{(n-2\ell)}(1)\left[f^{(2\ell-1)}(a)-f^{(2\ell-1)}(b)
\right] + E_n^m(f),\label{sum2}
\end{align}
where 
\begin{equation}
\abs{E_n^m(f)}
\leq K_{n,p}\norm{\fdn}_p(b-a)^{n+1/q}m^{-n}.\label{compositeerrorestimate}
\end{equation}
\end{corollary}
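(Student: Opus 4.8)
The plan is to apply the single--panel formula \eqref{basic} of Theorem~\ref{theoremapprox} on each subinterval $[x_{i-1},x_i]$ and then sum over $i=1,\dots,m$. Each panel has length $h=(b-a)/m$, and the monic polynomial of degree $n$ adapted to it is the rescaling $\phi_{n,i}(x)=(h/2)^n\phint\bigl((2x-x_{i-1}-x_i)/h\bigr)$. Differentiating gives $\phi_{n,i}^{(j)}(x)=(h/2)^{n-j}\phint^{(j)}\bigl((2x-x_{i-1}-x_i)/h\bigr)$, so that at the endpoints $\phi_{n,i}^{(j)}(x_i)=(h/2)^{n-j}\phint^{(j)}(1)$ and $\phi_{n,i}^{(j)}(x_{i-1})=(h/2)^{n-j}\phint^{(j)}(-1)$.

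The conceptual heart of the argument is the symmetry hypothesis. If $\phint$ is even or odd according as $n$ is even or odd, then $\phint^{(j)}(-1)=(-1)^{n+j}\phint^{(j)}(1)$ for every $j$; this is the same relation $\phi_n^{(m)}(a)=(-1)^{n+m}\phi_n^{(m)}(b)$ already used in Corollaries~\ref{corollaryone}--\ref{corollaryinfinity}. Substituting into the boundary sum of \eqref{basic} with $j=n-k-1$ and collecting the signs $(-1)^n(-1)^{n-k-1}=(-1)^{k+1}$, the $i$th panel contributes
\[
\frac{1}{n!}\sum_{k=0}^{n-1}\left(\frac{h}{2}\right)^{k+1}\phint^{(n-k-1)}(1)\left[f^{(k)}(x_{i-1})-(-1)^{k+1}f^{(k)}(x_i)\right].
\]

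Next I would sum over $i$. Using $\sum_{i=1}^m f^{(k)}(x_{i-1})=f^{(k)}(a)+\sum_{i=1}^{m-1}f^{(k)}(x_i)$ and $\sum_{i=1}^m f^{(k)}(x_i)=f^{(k)}(b)+\sum_{i=1}^{m-1}f^{(k)}(x_i)$, the interior nodes are doubled when $k$ is even and cancel when $k$ is odd, while the endpoints combine as $f^{(k)}(a)+f^{(k)}(b)$ for $k$ even and $f^{(k)}(a)-f^{(k)}(b)$ for $k$ odd. Reindexing the even indices by $k=2\ell$ and the odd indices by $k=2\ell-1$ then produces exactly the two sums \eqref{sum1} and \eqref{sum2} with the stated exponents. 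The main labor here is purely bookkeeping: tracking the signs through the symmetry step and checking that the even/odd split lands in \eqref{sum1} and \eqref{sum2} with the correct ranges $0\le\ell\le\lfloor(n-1)/2\rfloor$ and $1\le\ell\le\lfloor n/2\rfloor$.

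Finally, the error is $E_n^m(f)=\sum_{i=1}^m E_n(f;[x_{i-1},x_i])$. By the single--panel estimate of Theorem~\ref{theoremapprox}, each summand is bounded by $K_{n,p}\norm{\fdn}_{L^p([x_{i-1},x_i])}h^{n+1/q}$, with the same constant $K_{n,p}$ on every panel since the estimate is scale invariant. Applying the discrete H\"older inequality $\sum_{i=1}^m a_i\le m^{1/q}\bigl(\sum_{i=1}^m a_i^p\bigr)^{1/p}$ (valid since $1-1/p=1/q$) with $a_i=\norm{\fdn}_{L^p([x_{i-1},x_i])}$ collapses the sum of local norms into $m^{1/q}\norm{\fdn}_p$, because $\sum_{i=1}^m a_i^p=\norm{\fdn}_p^p$. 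Then $h^{n+1/q}m^{1/q}=(b-a)^{n+1/q}m^{-n}$ yields \eqref{compositeerrorestimate}; the endpoints $p=1$ ($q=\infty$) and $p=\infty$ ($q=1$) follow directly from $\sum_{i=1}^m\norm{\fdn}_{L^\infty([x_{i-1},x_i])}\le m\,\norm{\fdn}_\infty$ and the convention $1/\infty=0$, so no separate case analysis is genuinely needed.
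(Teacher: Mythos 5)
Your proposal is correct, and for the quadrature formula itself it is essentially the paper's argument: apply \eqref{basic} on each panel with the rescaled polynomial $\phi_{n,i}$, use the parity relation $\phint^{(n-k-1)}(-1)=(-1)^{k+1}\phint^{(n-k-1)}(1)$ so that contributions at interior nodes double when $k$ is even and telescope when $k$ is odd, then reindex by $k=2\ell$ and $k=2\ell-1$; your sign bookkeeping and the ranges $0\leq\ell\leq\lfloor(n-1)/2\rfloor$, $1\leq\ell\leq\lfloor n/2\rfloor$ all check out. The one genuine divergence is the error bound. The paper glues the rescaled polynomials into a single piecewise kernel $\psi_n(x)=\phi_{n,i}(x)\chi_{(x_{i-1},x_i)}(x)$ and applies H\"older once on all of $[a,b]$, computing $\norm{\psi_n}_q=\left(\frac{b-a}{2}\right)^{n+1/q}\norm{\phint}_q\,m^{-n}$ exactly; you instead apply H\"older panel by panel and then the discrete H\"older inequality $\sum_{i=1}^m a_i\leq m^{1/q}\left(\sum_{i=1}^m a_i^p\right)^{1/p}$ with $a_i=\norm{\fdn}_{L^p([x_{i-1},x_i])}$, handling $p=1,\infty$ directly. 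The two computations are equivalent and produce the identical constant, since $\norm{\psi_n}_q^q$ is precisely the sum of the panel integrals you estimate; your version keeps the panels separate, which would adapt to nonuniform partitions with only notational changes, while the paper's glued-kernel form makes the sharpness discussion immediate, since equality in a single global H\"older application is governed by the one extremal pairing with $\psi_n$. On that point: the paper's proof ends by observing the constant is sharp (for $p=1$ via one-sided $\delta$-sequences at the extrema of $\psi_n$, which is only one-sidedly continuous at the nodes); you omit this, but the statement as quoted asserts only the inequality \eqref{compositeerrorestimate}, so nothing required is missing.
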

\begin{proof}
	Write $\intab f(x)\,dx \doteq \sum_{i=1}^m\int_{x_{i-1}}^{x_i}f(x)\,dx$ and
use the approximation in the Theorem on each interval $[x_{i-1},x_i]$.  
Scaling to 
$[x_{i-1},x_i]$ gives
$$
\phi_{n,i}(x)=\left(\frac{b-a}{2m}\right)^n\phint\left(\frac{2mx-2ma-2(b-a)i+b-a}{b-a}\right).
$$
Upon changing summation order, this gives
\begin{align*}
&\intab f(x)\,dx \doteq\\
&\frac{1}{n!}\sum_{k=0}^{n-1}(-1)^{k+1}\left(\frac{b-a}{2m}\right)^{k+1}
\sum_{i=1}^m\left[\phint^{(n-k-1)}(-1)f^{(k)}(x_{i-1})-
\phint^{(n-k-1)}(1)f^{(k)}(x_i)\right].
\end{align*}
We have that $\phint^{(n-k-1)}$ is even whenever
$k$ is odd, and is odd whenever $k$ is even.  Hence, the sum on $i$ telescopes
when $k$ is odd.

The error is written
$$
\abs{E_n^m(f)}=\frac{1}{n!}\left|\intab f^{(n)}(x)\psi_n(x)\,dx\right|
\leq \frac{\norm{f^{(n)}}_p\norm{\psi_n}_q}{n!},
$$
where $\psi_n(x)=\phi_{n,i}(x)\chi_{(x_{i-1},x_i)}(x)$.  And,
with $\Delta x=(b-a)/(2m)$,
\begin{eqnarray*}
\norm{\psi_n}_q & = & \left(\sum_{i=1}^m\int_{x_{i-1}}^{x_i}\abs{\phi_{n,i}
(x)}^q\,dx\right)^{1/q}\\
 & = & \left(\frac{b-a}{2m}\right)^n
\left(\sum_{i=1}^m\int_{x_{i-1}}^{x_i}\left|\phint\left(\frac{
2x-x_{i-1}-x_i}{\Delta x}\right)\right|^q\,dx\right)^{1/q}\\
 & = & \left(\frac{b-a}{2m}\right)^{n+1/q}\left(\sum_{i=1}^m\int_{-1}^1\abs{
\phint(x)}^q\,dx\right)^{1/q}
=  \left(\frac{b-a}{2}\right)^{n+1/q}\norm{\phint}_q m^{-n}.
\end{eqnarray*}

The constant is proved sharp as in the Theorem.  In the case $p=1$ the
same method works since at each point where $\psi_n$ attains its
extrema, it is either continuous from the left or from the right and we
can take a $\delta$-sequence supported on the left or right.
\end{proof}

When $n=1$, the sum in \eqref{sum2} is absent and we get the usual composite trapezoidal rule
$$
\intab f(x)\,dx\doteq \frac{b-a}{2m}\left[f(a)+f(b)+2\sum_{i=1}^{m-1}f(x_i)\right].
$$
When $n=2$, the sum in \eqref{sum1} contains only the $\ell=0$ term.  Derivatives of $f$ then 
appear as $f'(a)$ and $f'(b)$ but not at interior nodes, cf. \cite{talvilawiersmajmi}.

If $\fdn\in L^p([a,b])$ for a value of $p$ different from $1$, $2$,
or $\infty$ we can use the norm $\norm{\fdn}_p$ in \eqref{errorestimate}
and estimate the value of $\norm{\phi_n}_q$, even though the minimizing
polynomial is not known.  This gives an estimate of $\Knpt$. Together
with
the exact values of $\Knpt$ from Corollaries~\ref{corollaryone},
\ref{corollarytwo}, \ref{corollaryinfinity}, these give the
asymptotic behaviour
of $\Knpt$ as $n\to\infty$, uniformly
valid for all $p\in[1,\infty]$.

\begin{prop}\label{propestimatephin}
Let $\Knpt$ be the constant from 
Theorem~\ref{theoremapprox}.  Then ${\tilde K_{n,1}}=2^{1-2n}/n!$,
${\tilde K_{n,2}}=n!(2n+1)^{-1/2}/(2n)!$, ${\tilde K_{n,\infty}}=2^{-2n}/n!$.
For each fixed $n$, $K_{n,p}$ is a decreasing function of $p$.  Hence,
$
K_{n,p}\leq K_{n,1}$.
\end{prop}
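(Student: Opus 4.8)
The plan is to handle the three explicit constants first, since they require essentially no work beyond the Corollaries. By Theorem~\ref{theoremapprox} the optimal constant is $\Knpt = 2^{-n-1/q}\norm{\phint}_q/n!$ with the norm taken over $[-1,1]$, so I would simply substitute the minimizing polynomials already identified. For $p=1$ (so $q=\infty$), Corollary~\ref{corollaryone} gives $\phint = 2^{1-n}T_n$ with $\norm{T_n}_\infty = 1$, yielding ${\tilde K_{n,1}} = 2^{1-2n}/n!$; for $p=2$, Corollary~\ref{corollarytwo} gives $\phint = 2^n(n!)^2P_n/(2n)!$ together with $\int_{-1}^1 P_n^2 = 2/(2n+1)$, yielding ${\tilde K_{n,2}} = n!(2n+1)^{-1/2}/(2n)!$; and for $p=\infty$ (so $q=1$), Corollary~\ref{corollaryinfinity} gives $\phint = 2^{-n}U_n$ with $\norm{U_n}_1 = 2$, yielding ${\tilde K_{n,\infty}} = 2^{-2n}/n!$. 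Each value is already visible as the coefficient of $\norm{\fdn}_p$ in \eqref{errorone}, \eqref{errortwo} and \eqref{errorinfinity}.

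For the monotonicity I would exploit the observation that the prefactor $2^{-1/q}$ converts the $L^q([-1,1])$ norm into the $L^q$ norm for the probability measure $d\mu = dx/2$ on $[-1,1]$: for $\phi\in\polyn$,
$$
2^{-1/q}\norm{\phi}_q = \left(\frac{1}{2}\intone\abs{\phi(x)}^q\,dx\right)^{1/q},
$$
with the obvious interpretation as an essential supremum when $q=\infty$. Because $\mu$ is a probability measure, the power-mean (Jensen) inequality makes $q\mapsto 2^{-1/q}\norm{\phi}_q$ non-decreasing in $q$, hence non-increasing in $p$ (as $p,q$ are conjugate, $q$ falls from $\infty$ to $1$ as $p$ rises from $1$ to $\infty$). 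Multiplying by the positive constant $2^{-n}/n!$ then shows that, for each fixed polynomial, $K_{n,p}$ decreases as $p$ increases.

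To transfer this to the optimal constant I would write $\Knpt = (2^{-n}/n!)\inf_{\phi\in\polyn}2^{-1/q}\norm{\phi}_q$ and use that an infimum of non-increasing functions is non-increasing: for $p_1<p_2$ and every $\phi$, monotonicity in the exponent gives $2^{-1/q_1}\norm{\phi}_{q_1}\geq 2^{-1/q_2}\norm{\phi}_{q_2}\geq\inf_\psi 2^{-1/q_2}\norm{\psi}_{q_2}$, and taking the infimum over $\phi$ on the left yields ${\tilde K_{n,p_1}}\geq{\tilde K_{n,p_2}}$. The conclusion $K_{n,p}\leq K_{n,1}$ is then immediate from $1\leq p$. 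I expect the only genuinely delicate point to be precisely this last transfer: the minimizing polynomial itself depends on $p$, so one cannot simply compare norms of a single fixed polynomial across exponents; the infimum argument sidesteps this by using each exponent's own minimizer as a competitor, so that no uniform control over the family of minimizers is needed.
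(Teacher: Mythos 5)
Your proposal is correct and is essentially the paper's own proof: the three explicit values are read off from Corollaries~\ref{corollaryone}, \ref{corollarytwo}, \ref{corollaryinfinity}, and your power-mean observation that $q\mapsto 2^{-1/q}\norm{\phi}_q$ is non-decreasing is exactly the paper's Jensen-inequality step $\norm{\phint}_q\leq 2^{1/q-1/r}\norm{\phint}_r$. Your infimum argument for transferring monotonicity to the optimal constant is the same move the paper makes by testing the $q_2$-minimization against the $q_1$-minimizer, just packaged slightly more abstractly.
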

\begin{proof}
The values of $\Knpt$ for $p=1, 2, \infty$ are given in 
Corollaries~\ref{corollaryone},
\ref{corollarytwo}, \ref{corollaryinfinity}, respectively.

If $q<r<\infty$ then Jensen's inequality gives
$$
\norm{\phint}_q^r=\left(2\int_{-1}^1\abs{\phint(x)}^q\,\frac{dx}{2}
\right)^{r/q}\leq 2^{r/q}\int_{-1}^1\abs{\phint(x)}^r\,\frac{dx}{2}
=2^{r/q-1}\norm{\phint}_r^r.
$$
And, $\norm{\phint}_q\leq 2^{1/q-1/r}\norm{\phint}_r$.

Let $1\leq p_1<p_2\leq\infty$ with corresponding conjugate exponents
$q_1,q_2$.  Let $\phint$ be the minimizing polynomial for $\norm{\cdot}_{q_1}$.
Then $$
K_{n,p_2}\leq\frac{\norm{\phint}_{q_2}}{n!2^{n+1/q_2}}\leq\frac{2^{1/q_2-1/q_1}
\norm{\phint}_{q_1}}{n!2^{n+1/q_2}}=K_{n,p_1}.
$$
Hence, $K_{n,p}$ is decreasing.
\end{proof}

\section{Lemma on minimizing polynomials}
For each of the $L^p$ norms there is a unique monic polynomial that
minimizes the norm.
Define $F_q\fn\polym\to\R$ by $F_q(\phi)=\norm{\phi}_q$ where 
$1\leq q\leq \infty$
and the norms are over compact interval $[a,b]$.
Since $F_q(\phi)$ is bounded below for $\phi\in\polym$ it has
an infimum over $\polym$.  It also has a unique minimum at a polynomial that
has $m$ roots in $[a,b]$.  As well, the error-minimizing polynomial is
even or odd about the midpoint of $[a,b]$ as $m$ is even or odd.

\begin{lemma}\label{lemma}
(a) For $m\geq 2$, let $\phi\in\polym$ with a non-real root.  There  exists
$\psi\in\polym$ with a real root such that $F_q(\psi)<F_q(\phi)$.
(b) Let $\phi\in\polym$ with a root $t\not\in[a,b]$.  
There  exists $\psi\in\polym$ with a root in $[a,b]$ such that 
$F_q(\psi)<F_q(\phi)$.
(c) If $\phi$ minimizes $F_q$ then it has $m$ simple zeros
in $[a,b]$.
(d) If $F_q$ has a minimum in $\polym$ it is unique.
(e) $F_q$ attains its minimum over $\polym$.
(f) If $\phi\in\polym$ is neither even nor odd about $c:=(a+b)/2$ then
there is a polynomial $\psi\in\polym$ that is either even or odd
about $c$ such that $F_q(\psi)< F_q(\phi)$.
(g) The minimum of $F_q$ occurs
at a polynomial $\phi\in\polym$ with $m$ simple zeros
in $[a,b]$.  If $m$ is even about $c$ then
so is $\phi$.  If $m$ is odd about $c$ then so is $\phi$.
This minimizing polynomial is unique.
(h) Suppose $\phi\in\polym$ is a minimum of $F_\infty$.  Then
$\phi(x)=\prod_{i=1}^m(x-t_i)$ for $a< t_1<t_2
<\cdots<t_m< b$.  For each $1\leq i\leq m-1$
there is
$\xi_i\in(t_i,t_{i+1})$ such that 
$\abs{\phi(\xi_i)}=\norm{\phi}_\infty$.
\end{lemma}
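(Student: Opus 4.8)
The plan is to treat the seven structural claims as a pipeline: first secure existence, then cut the search space down to real-rooted, interval-supported, simple-rooted, symmetric polynomials, and finally read off uniqueness and the equioscillation refinement. For (e) I would identify $\polym$ with $\R^m$ through the $m$ free coefficients of a monic degree-$m$ polynomial. The map $\phi\mapsto F_q(\phi)=\norm{\phi}_q$ is continuous in these coefficients, and since all norms on the finite-dimensional space of polynomials of degree at most $m$ are equivalent, $F_q(\phi)\to\infty$ as the coefficient vector leaves every bounded set; hence the sublevel sets are compact and a minimizer exists. For the reduction steps (a) and (b) the engine is strict pointwise domination. In (a) I would write a non-real conjugate pair as a factor $(x-\alpha)^2+\beta^2$ with $\beta\neq 0$ and replace it by $(x-\alpha)^2$; since $(x-\alpha)^2<(x-\alpha)^2+\beta^2$ everywhere, the modulus of the new monic polynomial is strictly smaller at every point where the remaining factor is nonzero. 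In (b) a real root $t>b$ (resp.\ $t<a$) is moved to the nearest endpoint $b$ (resp.\ $a$); then $\abs{x-b}\le\abs{x-t}$ on $[a,b]$ with strict inequality off a finite set. In both cases $F_q$ strictly decreases for $1\le q<\infty$ by monotonicity of the integral, and for $q=\infty$ by noting that on the compact interval the pointwise ratio of the moduli is continuous and bounded away from $1$.

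The heart of the argument is (c) and (d), which I would prove together for $1<q<\infty$ by the calculus of variations and convexity. Because a perturbation $\phi+th$ with $h$ of degree at most $m-1$ stays monic of degree $m$, stationarity of $F_q$ at a minimizer yields the Euler--Lagrange condition $\intab \abs{\phi(x)}^{q-2}\phi(x)\,h(x)\,dx=0$ for every such $h$ (the integrand is continuous since $q>1$, so differentiation under the integral is justified). A sign-change argument then forces simplicity: if $\phi$ changed sign at only $k\le m-1$ interior points, I would build $h$ of degree $k$ vanishing exactly there and agreeing in sign with $\phi$ on each subinterval, making $\intab\abs{\phi}^{q-2}\phi\,h\,dx>0$, a contradiction. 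Thus $\phi$ has $m$ interior sign changes, hence $m$ simple zeros in $(a,b)$, recovering (a) and (b) for the minimizer in this range. Uniqueness (d) follows from strict convexity of $L^q$: two distinct minimizers of equal minimal norm would have a monic average of strictly smaller norm.

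For the symmetry statements (f) and (g) I would normalize to $[a,b]=[-1,1]$, so $c=0$, and introduce the reflection $\tilde\phi(x)=(-1)^m\phi(-x)$, which is again monic of degree $m$ and satisfies $\norm{\tilde\phi}_q=\norm{\phi}_q$. Its average $\psi=(\phi+\tilde\phi)/2$ is monic, is even when $m$ is even and odd when $m$ is odd, and has $\norm{\psi}_q\le\norm{\phi}_q$; strict convexity upgrades this to the strict inequality of (f) whenever $\phi$ is not already symmetric. For (g) I would combine (e), (d), (c): the unique minimizer $\phi$ coincides with its reflection $\tilde\phi$ (another minimizer of equal norm), so it carries the asserted parity and, by (c), has $m$ simple interior zeros. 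Finally, (h) is the Chebyshev equioscillation theorem in this guise: writing $\phi=x^m-p$ with $p$ the best uniform approximation to $x^m$ from degree at most $m-1$, the error equioscillates at $m+1$ points with alternating signs and common modulus $\norm{\phi}_\infty$. Since $\phi$ keeps a fixed sign on each of the $m+1$ subintervals cut out by its $m$ simple roots and consecutive equioscillation points have opposite signs, exactly one equioscillation point lands in each subinterval; the interior ones furnish the required $\xi_i\in(t_i,t_{i+1})$.

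The step I expect to be the real obstacle is the pair of endpoint cases $q=1$ and $q=\infty$, where $L^q$ is neither strictly convex nor, in the relevant sense, differentiable, so the variational derivation of simplicity in (c), the convexity proof of uniqueness in (d), and the strict decrease in the symmetrization (f) all fail as stated. There I would fall back on the classical theory of best approximation from the Chebyshev (Haar) system $\{1,x,\dots,x^{m-1}\}$: uniqueness of best uniform and best $L^1$ approximation of a continuous function from a Haar subspace, together with the equioscillation characterization, which simultaneously delivers uniqueness, simplicity, interiority of the zeros, and---via the reflection argument---the parity. Reconciling the smooth $1<q<\infty$ treatment with these endpoint results into a single clean statement is the delicate bookkeeping I would budget the most care for.
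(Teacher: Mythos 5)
Your proposal cannot be ``the same approach as the paper'' for a simple reason: the paper contains no proof of this lemma at all --- it states the result and immediately defers to the companion paper \cite{talvilawiersmajmi}, citing the standard approximation-theory texts (Cheney, Natanson, Rivlin, etc.) for background. Measured against that, your pipeline is the standard, essentially correct argument and matches in spirit what the cited companion paper does: coercivity in coefficient space (via finite-dimensional norm equivalence) for existence (e); strict pointwise domination, replacing $(x-\alpha)^2+\beta^2$ by $(x-\alpha)^2$ and pushing exterior real roots to the nearest endpoint, for (a) and (b), with the uniform ratio bound correctly handling $q=\infty$ (the ratio $(x-\alpha)^2/[(x-\alpha)^2+\beta^2]$, resp.\ $(b-x)/(t-x)$, has supremum strictly below $1$ on the compact interval); the Euler--Lagrange orthogonality $\intab\abs{\phi}^{q-2}\phi\,h\,dx=0$ for all $h$ of degree at most $m-1$, plus the sign-change counting argument, for simplicity and interiority in (c); strict convexity of $L^q$ for uniqueness (d); reflection-averaging $\tilde\phi(x)=(-1)^m\phi(-x)$ for the parity claims (f), (g); and the alternation count --- $m+1$ equioscillation points distributed one per sign-constant subinterval cut out by the $m$ simple roots --- for (h), which also yields $a<t_1$ and $t_m<b$. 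Two remarks. First, you rightly flag that at $q=1$ and $q=\infty$ strict convexity and G\^{a}teaux differentiability fail; your fallback on Haar-system theory (Chebyshev equioscillation and uniqueness for $q=\infty$, Jackson's uniqueness theorem for best $L^1$ approximation from a Haar subspace) is exactly how these endpoints are classically settled, though as written this portion is a citation rather than a proof --- which puts you on the same footing as the paper itself. Second, the strict inequality in (f) at the endpoint norms needs no convexity repair at all: once (d) and (e) are in hand, the unique minimizer coincides with its own reflection and hence has the asserted parity, so for any nonsymmetric $\phi$ the minimizer itself is a symmetric $\psi$ with $F_q(\psi)<F_q(\phi)$ strictly, since $\phi$ is not the minimizer. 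With that small rerouting, your treatment covers every clause of the lemma.
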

This result is proved in \cite{talvilawiersmajmi}.  See also
\cite{cheney}, \cite{davis}, \cite{devorelorentz}, \cite{golomb},
\cite{lorentz}, \cite{natansonI}, \cite{natansonII}, \cite{rivlin},
\cite{steffens}, \cite{timan} for background on this problem and 
references to original papers by Bernstein, Chebyshev, Jackson,
etc.

The cases $q=1,2,\infty$ are used in Corollaries~\ref{corollaryinfinity},
\ref{corollarytwo}, \ref{corollaryone}, respectively.  Here, the
minimizing polynomials are orthogonal polynomials.  No explicit solutions
appear to be known for any other values of $q$.   Gillis and Lewis
\cite{gillislewis} give a heuristic argument to show that for no other
values of $q$ are the minimizing polnomials a family of orthogonal polynomials.

\section{Alexiewicz norm}
The Alexiewicz norm is useful for functions or distributions for which
$\intab f(x)\,dx$ exists but $\intab\abs{f(x)}\,dx$ diverges.  It is
defined as $\norm{f}=\sup_{a\leq x\leq b}\abs{\int_a^x f(t)\,dt}$.  If
$F\in C([a,b])$ with $F(a)=0$ then define $\alexc$ to be the
Schwartz distributions, $f$, for which $F'=f$.  The derivative is
understood in the distributional sense, $\langle F',\phi\rangle
=-\langle F,\phi'\rangle =-\int_a^b F(x)\phi'(x)\,dx$ where $\phi\in
C^\infty_c((a,b))$ (smooth functions with compact support in $(a,b)$).  Then
$\alexc$ is a Banach space isometrically isomorphic to the continuous
functions on $[a,b]$ that vanish at $a$, and $\norm{f}=\norm{F}_\infty$
where $F$ is the unique primitive of $f$.  This integration process
is often called the {\it continuous primitive integral} and
$\int_a^x f(x)\,dx=F(x)$ for all $x\in[a,b]$.  See \cite{talviladenjoy}
for details.  Since this integral uses the space of all continuous
functions as primitives, it includes the Lebesgue integral (whose
primitives are absolutely continuous) and the Henstock--Kurzweil
integral (whose primitives are continuous but need not be
absolutely continuous and are described in \cite{lee}).

An example of a function integrable in the Henstock--Kurzweil sense but
not in $L^1([-1,1])$ is given by $f(x)=F'(x)$ where
$$
F(x)=\left\{\begin{array}{cl}
x^2\sin(x^{-3}), & x\not =0\\
0, & x=0.
\end{array}
\right.
$$
The pointwise derivative $F'(x)$ exists at each point.
If we take $F$ to be a continuous monotonic function whose derivative is
zero almost everywhere then the Lebesgue integral $\intab F'(x)\,dx=0$ while
the continuous primitive integral gives $\intab F'=F(b)-F(a)$.  If $f$ is
a continuous function differentiable nowhere in
the pointwise sense then the distributional derivative $F'\in\alexc$
and $\int_a^xF'=F(x)-F(a)$ for all $x\in[a,b]$, even though $F'$ is a
distribution that does not have any pointwise values and the Lebesgue
integral of $F'$ is meaningless.  Additional
examples are given in \cite{talviladenjoy}.

The integration by parts formula for $f\in\alexc$ with primitive $F$
and function $g$ of bounded variation is given in terms of a Riemann--Stieltjes
integral
\begin{equation}
\intab f(x)g(x)\,dx = F(b)g(b)-F(a)g(a)-\intab F(x)dg(x)\label{alexparts}.
\end{equation}
The H\"older inequality is then
\begin{equation}
\left|\intab f(x)g(x)\,dx\right|\leq
\norm{f}(\abs{g(b)}+ V\!g).\label{alexholder}
\end{equation}
See \cite[Theorem~12.3]{lee}.

\begin{theorem}\label{theoremalex}
Let $n\in\N$.
Let $f\fn[a,b]\to\R$ such that the pointwise derivative $f^{(n-1)}$ is continuous.
Then the distributional derivative $f^{(n)}\in\alexc$.
Write
\begin{align}
&\intab f(x)\,dx  = \notag\\
&\frac{1}{(n-1)!}\sum_{k=0}^{n-2}\left[
\frac{(b-a)^{k+1}(2n-k-2)!(n-k-1)!}{2^{2k+1}(2n-2k-2)!(k+1)!}\right]
\left[
f^{(k)}(a)+(-1)^{k}f^{(k)}(b)\right]\notag\\
&    \quad +\left\{\begin{array}{cl}
\frac{(b-a)^nf^{(n-1)}(a)}{n!2^{2n-2}}, &  n \text{ odd}\label{alexbasic}\\
0, & n \text{ even}
\end{array}
\right.
+E_n(f).
\end{align}
Then 
\begin{equation}
\abs{E_n(f)}\leq
\frac{\norm{\fdn}(b-a)^{n}}{(n-1)!2^{2n-2}}.\label{alexerrorestimate}
\end{equation}
The estimate on $\abs{E_n(f)}$ is then sharp in the sense that the
coefficient of $\norm{\fdn}$ cannot be reduced.
\end{theorem}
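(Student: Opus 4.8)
The plan is to reuse the quadrature identity \eqref{basic} with the specific monic polynomial built from the Chebyshev polynomial of the first kind, replacing the one step of integration by parts that fails for merely continuous $f^{(n-1)}$ by the continuous-primitive-integral formula \eqref{alexparts}. First I would record that $\fdn\in\alexc$: since $f^{(n-1)}$ is continuous, the function $F_{n-1}(x):=f^{(n-1)}(x)-f^{(n-1)}(a)$ is continuous with $F_{n-1}(a)=0$ and has distributional derivative $\fdn$, so that $\norm{\fdn}=\sup_{a\leq x\leq b}\abs{F_{n-1}(x)}$.

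To establish \eqref{alexbasic} I would run the repeated integration by parts that yields \eqref{basic}, taking $\phin(x)=[(b-a)/2]^n\,2^{1-n}\bigl(T_n([2x-a-b]/[b-a])-1\bigr)$, which is monic of degree $n$. The only non-classical step is the first one, $\intab\fdn\phin\,dx$, which transfers the distributional derivative onto $\phin$; here I invoke \eqref{alexparts} with $g=\phin\in BV([a,b])$, after which the integrand $F_{n-1}\phin'$ is an ordinary continuous integral and the remaining $n-1$ integrations by parts are classical. Because $T_n(1)=1$ we get $\phin(b)=0$, which deletes the $k=n-1$ endpoint term at $b$; because $T_n(-1)=(-1)^n$, the endpoint term at $a$ vanishes when $n$ is even and contributes the displayed term $(b-a)^nf^{(n-1)}(a)/(n!2^{2n-2})$ when $n$ is odd. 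The coefficients of the terms $k=0,\dots,n-2$ coincide with those of \eqref{expansion1} in Corollary~\ref{corollaryone}, since $\phin^{(m)}$ for $m\geq1$ is unaffected by the additive constant in $T_n-1$.

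For the bound, using $\phin(b)=0$ and $F_{n-1}(a)=0$ in \eqref{alexparts} gives $E_n(f)=\frac{(-1)^{n+1}}{n!}\intab F_{n-1}(x)\phin'(x)\,dx$, so that $\abs{E_n(f)}\leq\frac{1}{n!}\sup\abs{F_{n-1}}\intab\abs{\phin'(x)}\,dx=\frac{1}{n!}\norm{\fdn}\,V\!\phin$. It then remains to compute $V\!\phin=\intab\abs{\phin'(x)}\,dx$. On $[-1,1]$ the polynomial $2^{1-n}(T_n-1)$ has total variation $2^{1-n}\cdot2n$, because $T_n(\cos\theta)=\cos n\theta$ is monotone on each of the $n$ subintervals cut out by its extrema $\cos(k\pi/n)$, where it takes the alternating values $(-1)^k$, each step contributing a swing of $2$; scaling by $[(b-a)/2]^n$ gives $V\!\phin=n\,2^{2-2n}(b-a)^n$, whence the constant $\frac{(b-a)^n}{(n-1)!2^{2n-2}}$ of \eqref{alexerrorestimate}.

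Finally, for \emph{sharpness} I would show the factor $V\!\phin/n!$ is attained in the limit. The equality case of the Riemann--Stieltjes estimate asks for $F_{n-1}=\norm{\fdn}\,\sgn\phin'$, a step function jumping at the $n-1$ interior zeros of $\phin'$ (the transformed interior extrema of $T_n$). Since any primitive must be continuous with value $0$ at $a$, I would take continuous $G_m$ equal to $M\,\sgn\phin'$ off shrinking neighborhoods of $a$ and of the zeros of $\phin'$, interpolating linearly across them, with $\sup\abs{G_m}=M$ and $G_m(a)=0$; integrating $G_m$ a further $n-1$ times yields $f_m$ with continuous $f_m^{(n-1)}=G_m+f_m^{(n-1)}(a)$ and $\norm{f_m^{(n)}}=M$. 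As the neighborhoods shrink, $\intab G_m\phin'\to M\intab\abs{\phin'}=M\,V\!\phin$, so $\abs{E_n(f_m)}/\norm{f_m^{(n)}}\to V\!\phin/n!$. I expect this sharpness step to be the main obstacle: the care lies in verifying that the continuity constraint $G_m(a)=0$ and the rounding of the corners of the sign pattern cost only $o(1)$, since $\phin'$ is bounded and the reshaped intervals have vanishing length, so that the constant is genuinely approached and cannot be reduced.
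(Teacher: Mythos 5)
Your proposal is correct, and for the identity and the upper bound it follows the paper's route exactly: the same polynomial $\phin(x)=2^{1-2n}(b-a)^n\bigl[T_n([2x-a-b]/[b-a])-1\bigr]$, the same observations that $\phin(b)=0$ and $V\!\phin=n2^{2-2n}(b-a)^n$ (the paper computes $V\!T_n=\norm{T_n'}_1=n\norm{U_{n-1}}_1=2n$, you read the same value off the alternation of $T_n$ at $\cos(k\pi/n)$), and the estimate via \eqref{alexparts}/\eqref{alexholder}; your omission of the paper's verification that this $\phin$ minimizes $\abs{\phin(b)}+V\!\phin$ over $\polyn$ (by reducing the variation to the $L^1$-minimality of $U_{n-1}$) is harmless, since minimality is not asserted in the theorem statement. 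Where you genuinely diverge is the sharpness step, and your route is the stronger one. The paper invokes the $L^\infty$--$L^1$ equality case $f^{(n-1)}(x)=d\,\sgn[U_{n-1}(x)]$; taken literally this witness is discontinuous, so it violates the hypothesis that $f^{(n-1)}$ be continuous (its $\fdn$ is a sum of point masses, not in $\alexc$), and moreover its Alexiewicz norm is $\sup_x\abs{f^{(n-1)}(x)-f^{(n-1)}(a)}=2\abs{d}$ rather than $\abs{d}$, so as written it realizes only half the constant in \eqref{alexerrorestimate}. The quantity actually paired against $V\!\phin$ is the shifted primitive $F_{n-1}=f^{(n-1)}-f^{(n-1)}(a)$, whose extremal profile is $F_{n-1}\approx M\,\sgn\phin'$ subject to continuity and $F_{n-1}(a)=0$; your sequence $G_m$, equal to $M\,\sgn\phin'$ off shrinking neighbourhoods of $a$ and of the $n-1$ interior zeros of $\phin'$ with linear interpolation, targets exactly this profile, stays admissible under the stated hypotheses, and since $\phin'$ is bounded and the excised sets have measure $o(1)$ it yields $\int_a^b G_m\phin'\to M\,V\!\phin$, hence $\abs{E_n(f_m)}/\norm{f_m^{(n)}}\to V\!\phin/n!$. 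So your limiting construction is not merely an alternative: it is the argument that actually certifies the full constant, tightening the paper's equality-case shortcut.
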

\begin{proof}
Let $\phin\in\polyn$.
From \eqref{basic} and the H\"older inequality \eqref{alexholder} we have
$\abs{E_n(f)}\leq\norm{\fdn}(\abs{\phin(b)}+V\!\phin)$.  Note that
$V\!\phin=\intab\abs{\phin'(x)}\,dx=\norm{\phin'}_1=n\norm{\psi_{n-1}}_1,$
where $\psi_{n-1}=\phin'/n$.  Hence, to minimize the variation over 
$\phin\in\polyn$
we minimize the one-norm over $\psi_{n-1}\in\poly{{n-1}}$.  Adding a
constant to $\phin$ does not affect the variation so the unique minimizing
polynomial on $[-1,1]$ is (cf. proof of Corollary~\ref{corollaryinfinity})
\begin{eqnarray}
\phint(x) & = & -n2^{1-n}\int_x^1U_{n-1}(t)\,dt\label{alexphint}\notag\\
 & = & -2^{1-n}\int_x^1T_n'(t)\,dt=
2^{1-n}[T_n(x)-1].\label{alexminpoly}
\end{eqnarray}
The minimizing polynomial on $[a,b]$ is 
$$
\phin(x)=2^{1-2n}\left(b-a\right)^{n}\left[T_n\left(\frac{2x-a-b}{b-a}\right)-1
\right].
$$
We have $\phin(b)=0$ and 
\begin{eqnarray*}
V\!\phin & = & 2^{2-2n}(b-a)^{n-1}\intab\left|T_n'\left(\frac{2x-a-b}{b-a}\right)
\right|\,dx\\
 & = & 2^{1-2n}(b-a)^{n}\norm{T_n'}_1
=  2^{1-2n}(b-a)^{n}n\norm{U_{n-1}}_1=2^{2-2n}(b-a)^nn.
\end{eqnarray*}
The H\"older inequality \eqref{alexholder} now gives the estimate in 
\eqref{alexerrorestimate}.

Formula \eqref{alexbasic} is computed as in the proof of 
Corollary~\ref{corollaryone}.

Now show there can be equality in \eqref{alexerrorestimate}.  Using
\eqref{alexphint}, integrate by parts \eqref{alexparts} to get
\begin{eqnarray*}
E_n(f) & = & \frac{(-1)^n}{n!}\int_{-1}^1 \fdn(x)\phint(x)\,dx\\
 & = & \left\{\begin{array}{cl}
-\frac{2^{1-n}}{(n-1)!}\int_{-1}^1 f^{(n-1)}(x)U_{n-1}(x)\,dx, & n \text{ even}\\
-\frac{2^{2-n}f^{(n-1)}(-1)}{n!}+\frac{2^{1-n}}{(n-1)!}\int_{-1}^1 f^{(n-1)}(x)U_{n-1}(x)\,dx, & n \text{ odd.}
\end{array}
\right.
\end{eqnarray*}
By the usual H\"older inequality,
\begin{eqnarray*}
\frac{2^{1-n}}{(n-1)!}\left|\int_{-1}^1 f^{(n-1)}(x)U_{n-1}(x)\,dx\right|
 & \leq & \frac{2^{1-n}}{(n-1)!}\norm{f^{(n-1)}}_\infty\norm{U_{n-1}}_1\\
 & = & \frac{2^{2-n}}{(n-1)!}\norm{f^{(n-1)}}_\infty.
\end{eqnarray*}
As in the proof of Theorem~\ref{theoremapprox} there is equality when
$f^{(n-1)}(x)=d\,{\rm sgn}[U_{n-1}(x)]$ for some $d\in\R$.  Since the final
term in \eqref{alexparts} can then have any sign, we integrate to get
$$
f(x)=\frac{d}{(n-2)!}\int_{-1}^x(x-t)^{n-2}{\rm sgn}[U_{n-1}(t)]\,dt
$$
for $n\geq 2$, modulo a polynomial of degree at most $n-2$ that vanishes
at $-1$. 
For $n=1$, $f(x)=d(x+1)$.  Hence, the coefficient
in \eqref{alexerrorestimate} cannot be reduced.
\end{proof}

When $n=1$ the minimizing polynomial is $\phi_1(x)=x-b$.  
The approximation becomes $\intab f(x)\,dx\doteq f(a)(b-a)$ with
error $\abs{E_1(f)}\leq \norm{f'}(b-a)$.

When $n=2$ the minimizing polynomial is $\phi_2(x)=(x-a)(x-b)$.
The approximation reduces to the usual trapezoidal rule, 
$\intab f(x)\,dx\doteq (b-a)[f(a)+f(b)]/2$ with
error $\abs{E_2(f)}\leq \norm{f''}(b-a)^2/4$.  This appears as Theorem~5.1
in \cite{talvilawiersmajmi}.  An alternate form of the Alexiewicz norm
is also considered in this paper.

If $f^{(n)}\in\alexc$ then $f^{(n-1)}\in L^\infty([a,b])$ and the results
of Theorem~\ref{theoremalex} agree with Corollary~\ref{corollaryinfinity}
with $n$ reduced by one.

\begin{corollary}\label{corollarycompositealex}
The composite rule for a uniform partition, $x_i=a+(b-a)i/m$, $0\leq i\leq m$, is
\begin{align*}
&\intab f(x)\,dx  \doteq
\frac{1}{(n-1)!}\sum_{\ell=0}^{\lfloor(n-2)/2\rfloor}\left(\frac{b-a}{m}\right)^{2\ell+1}
\frac{(2n-2\ell-2)!(n-2\ell-1)!}{2^{4\ell+1}(2n-4\ell-2)!(2\ell+1)!}\ast\\
&\qquad\qquad\qquad\qquad\qquad\qquad\qquad\qquad\qquad\ast\left[f^{(2\ell)}(a)+f^{(2\ell)}(b)+2\sum_{i=1}^{m-1}f^{(2\ell)}(x_i)
\right]\label{sum1alex}\\
&+\frac{1}{(n-1)!}\sum_{\ell=1}^{\lfloor(n-1)/2\rfloor}\left(\frac{b-a}{m}\right)^{2\ell}
\frac{(2n-2\ell-1)!(n-2\ell)!}{2^{4\ell-1}(2n-4\ell)!(2\ell)!}
\left[f^{(2\ell-1)}(a)-f^{(2\ell-1)}(b)
\right]\\
&+\left\{\begin{array}{cl}
\frac{(b-a)^n}{n!2^{2n-2}m^n}
\sum_{i=0}^{m-1}f^{(n-1)}(x_i), & n \text{ odd}\\
0, & n \text{ even.}
\end{array}
\right.
\end{align*}
A sharp estimate for the error is
\begin{equation}
\abs{E_n^m(f)}
\leq \frac{\norm{\fdn}(b-a)^{n}}{m^{n-1}(n-1)!2^{2n-2}}.\label{compositeerrorestimatalex}
\end{equation}
\end{corollary}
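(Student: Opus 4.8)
The plan is to follow the pattern of the proof of Corollary~\ref{corollarycomposite}, replacing the $L^p$-minimizing polynomial by the Alexiewicz-minimizing polynomial $\phint(x)=2^{1-n}[T_n(x)-1]$ from \eqref{alexminpoly}. First I would write $\intab f(x)\,dx=\sum_{i=1}^m\int_{x_{i-1}}^{x_i}f(x)\,dx$ and apply Theorem~\ref{theoremalex} on each subinterval $[x_{i-1},x_i]$, using on the $i$th subinterval the scaled monic polynomial $\phi_{n,i}(x)=\bigl(\frac{b-a}{2m}\bigr)^n\phint\bigl(\frac{2mx-2ma-2(b-a)i+b-a}{b-a}\bigr)$, exactly as in Corollary~\ref{corollarycomposite}. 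Summing the resulting instances of \eqref{basic} and interchanging the $k$-sum with the $i$-sum produces the single expression $\frac{1}{n!}\sum_{k=0}^{n-1}(-1)^{k+1}\bigl(\frac{b-a}{2m}\bigr)^{k+1}\sum_{i=1}^m\bigl[\phint^{(n-k-1)}(-1)f^{(k)}(x_{i-1})-\phint^{(n-k-1)}(1)f^{(k)}(x_i)\bigr]$.

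To collect this into the stated form I would split off the top term $k=n-1$. For $0\le k\le n-2$ we have $\phint^{(n-k-1)}=2^{1-n}T_n^{(n-k-1)}$, so these derivatives have exactly the parity of the Chebyshev case used in Corollary~\ref{corollarycomposite}: $\phint^{(n-k-1)}$ is even for odd $k$ and odd for even $k$. Hence the inner $i$-sum telescopes for odd $k$, leaving only the boundary differences $f^{(k)}(a)-f^{(k)}(b)$ that feed the second displayed sum, while for even $k$ it produces the full node sum $f^{(k)}(a)+f^{(k)}(b)+2\sum_{i=1}^{m-1}f^{(k)}(x_i)$ feeding the first displayed sum; the explicit coefficients are read off from $T_n^{(m)}(1)$ via the Gegenbauer formula, precisely as in the proof of Corollary~\ref{corollaryone}. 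The term $k=n-1$ is special because it involves $\phint$ itself rather than a derivative: since $T_n(1)=1$ we have $\phint(1)=0$, so only the left endpoints survive, and $\phint(-1)=2^{1-n}[(-1)^n-1]$ vanishes for even $n$ but equals $-2^{2-n}$ for odd $n$. This is exactly what yields the interior-node sum $\frac{(b-a)^n}{n!2^{2n-2}m^n}\sum_{i=0}^{m-1}f^{(n-1)}(x_i)$ for odd $n$ and nothing for even $n$.

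For the error I would write $E_n^m(f)=\frac{(-1)^n}{n!}\intab\fdn(x)\psi_n(x)\,dx$, where $\psi_n$ equals $\phi_{n,i}$ on $(x_{i-1},x_i)$, and apply the Alexiewicz H\"older inequality \eqref{alexholder} to get $\abs{E_n^m(f)}\le\frac{1}{n!}\norm{\fdn}(\abs{\psi_n(b)}+V\psi_n)$. Because $\phint(1)=0$ we have $\psi_n(b)=0$, so only the total variation matters, and on a single subinterval $V_{[x_{i-1},x_i]}\phi_{n,i}=\bigl(\frac{b-a}{2m}\bigr)^nV_{[-1,1]}\phint=\bigl(\frac{b-a}{2m}\bigr)^n2^{2-n}n$, using the value $V\phint=2^{1-n}\norm{T_n'}_1=2^{2-n}n$ already computed in the proof of Theorem~\ref{theoremalex}. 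For even $n$ this closes cleanly: since $\phint(-1)=0=\phint(1)$, the piecewise polynomial $\psi_n$ is continuous across every interior node, so $V\psi_n$ is just the sum of the $m$ subinterval variations, namely $m\bigl(\frac{b-a}{2m}\bigr)^n2^{2-n}n$, which reduces at once to the coefficient in \eqref{compositeerrorestimatalex}.

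The main obstacle is the error estimate for odd $n$. There $\phint(-1)=-2^{2-n}\ne0$, so $\psi_n$ carries a jump of size $\bigl(\frac{b-a}{2m}\bigr)^n2^{2-n}$ at each of the $m-1$ interior nodes, and the total variation of a piecewise-continuous $BV$ function includes these jumps on top of the continuous variation; since the Alexiewicz norm is not subadditive over subintervals, one cannot merely add the single-interval estimates. The delicate step is to show that the interior-node jumps do not inflate the constant beyond the stated value—for example by exploiting the freedom to add a constant to $\phi_{n,i}$ on each subinterval (which leaves the variation unchanged while shifting the boundary $f^{(n-1)}$ terms) so as to realize a continuous $\psi_n$ attaining the no-jump variation, or by a direct cancellation in the Riemann--Stieltjes integral $\intab F\,d\psi_n$. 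I would verify carefully that this odd-$n$ bookkeeping indeed delivers the coefficient $\frac{(b-a)^n}{(n-1)!2^{2n-2}m^{n-1}}$ and does not produce extra node terms. Sharpness would then be argued as in Theorem~\ref{theoremapprox} and Corollary~\ref{corollarycomposite}, using test functions built from $\sgn U_{n-1}$ together with one-sided $\delta$-sequences placed at extrema where $\psi_n$ is one-sidedly continuous.
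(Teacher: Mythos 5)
Your derivation of the composite formula and your even-$n$ error argument coincide with the paper's proof: the paper also writes $E_n^m(f)=\frac{(-1)^n}{n!}\intab \fdn(x)\psi_n(x)\,dx$ with $\psi_n=\phi_{n,i}$ on each $(x_{i-1},x_i)$, computes $V\psi_n=\sum_{i=1}^m\int_{x_{i-1}}^{x_i}\abs{\phi_{n,i}'(x)}\,dx=\left(\frac{b-a}{2m}\right)^n m\,2^{1-n}n\norm{U_{n-1}}_1=\frac{(b-a)^n n}{m^{n-1}2^{2n-2}}$, and invokes \eqref{alexholder}. But note what the paper does \emph{not} do: it performs this variation computation for all $n$ with no mention of the interior-node discontinuities, i.e., it silently identifies $V\psi_n$ with the continuous part of the variation. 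So the ``delicate step'' you isolated for odd $n$ is not resolved by the paper -- it is passed over -- and your suspicion that the jumps matter is in fact correct and fatal to the stated constant. For odd $n$, $\phint(-1)=-2^{2-n}$ produces $m-1$ jumps of size $\left(\frac{b-a}{2m}\right)^n2^{2-n}$, the Riemann--Stieltjes measure $d\psi_n$ acquires atoms at the interior nodes, and a continuous $F=f^{(n-1)}-f^{(n-1)}(a)$ can align with both the absolutely continuous part and the atoms at $o(1)$ cost, so the sharp constant is $\frac{(mn+m-1)(b-a)^n}{n!\,2^{2n-2}m^n}$, strictly larger than \eqref{compositeerrorestimatalex} when $m\geq 2$. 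A concrete violation at $n=1$, $m=2$ on $[0,2]$: the displayed rule is $\int_0^2 f(x)\,dx\doteq f(0)+f(1)$ with claimed bound $2\norm{f'}$; take $f$ continuous piecewise linear with $f(0)=0$, $f\equiv 1$ on $[\varepsilon,1-\varepsilon]\cup[1+\varepsilon,2]$, $f(1)=-1$. Then $\norm{f'}=\sup_x\abs{f(x)-f(0)}=1$ while $\int_0^2 f\to 2$ and $f(0)+f(1)=-1$, so $E_1^2(f)\to 3>2$. Thus the ``direct cancellation'' you hoped for in $\intab F\,d\psi_n$ cannot exist uniformly in $f$; for the rule as displayed, the estimate only holds (and your continuity argument only closes) when $n$ is even, where $\phint(\pm 1)=0$, or when $m=1$.

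Your other proposed repair -- adding constants $c_i$ per subinterval -- is sound mathematics but proves a different statement. To make $\psi_n$ continuous \emph{and} keep $\psi_n(b)=0$ (both are needed in \eqref{alexholder}) you must take $c_i=-(m-i)\left(\frac{b-a}{2m}\right)^n2^{2-n}$, and Abel summation of the resulting boundary terms converts the equal-weight node sum $\frac{(b-a)^n}{n!\,2^{2n-2}m^n}\sum_{i=0}^{m-1}f^{(n-1)}(x_i)$ of the displayed rule into the single term $\frac{(b-a)^n f^{(n-1)}(a)}{n!\,2^{2n-2}m^{n-1}}$; for that modified composite rule the bound \eqref{compositeerrorestimatalex} is true and sharp, but it is not the rule stated in the corollary. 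In summary: your proposal reproduces the paper's proof for the approximation formula (your $k=n-1$ parity bookkeeping, which the paper only gestures at, is correct) and for even $n$; for odd $n$ you honestly flagged a gap, and the gap is genuine -- not only in your sketch but in the paper's own one-line variation computation, which undercounts $V\psi_n$ by the jump terms. Either the constant in \eqref{compositeerrorestimatalex} must grow to $\frac{(mn+m-1)(b-a)^n}{n!\,2^{2n-2}m^n}$ for odd $n$, or the odd-$n$ quadrature formula must be modified as above.
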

\begin{proof}
Using the minimizing polynomial \eqref{alexminpoly}, the proof of the
approximation formula is similar to the proof of 
Corollary~\ref{corollarycomposite}.  With the notation of that corollary,
\begin{eqnarray*}
V\!{\psi_n} & = & \sum_{i=1}^m\int_{x_{i-1}}^{x_i}\abs{\phi_{n,i}'
(x)}\,dx
  =  \left(\frac{b-a}{2m}\right)^{n-1}
\sum_{i=1}^m\int_{x_{i-1}}^{x_i}\left|\phint'\left(\frac{
2x-x_{i-1}-x_i}{\Delta x}\right)\right|\,dx\\
 & = & \left(\frac{b-a}{2m}\right)^{n}\sum_{i=1}^m\int_{-1}^1\abs{
\phint'(x)}\,dx
=  \left(\frac{b-a}{2m}\right)^{n}mV\!(2^{1-n}T_n)\\
 & = & \left(\frac{b-a}{2m}\right)^{n}m2^{1-n}n\norm{U_{n-1}}_1
=  \frac{(b-a)^nn}{m^{n-1}2^{2n-2}}.
\end{eqnarray*}
Formula \eqref{compositeerrorestimatalex} now follows from the
H\"older inequality \eqref{alexholder}. 

\end{proof}

\section{Degree of exactness}
We now show that formula \eqref{basic} is exact for all $\phin\in\polyn$
when $f$ is a polynomial of degree at most $n-1$ and this formula is
exact for $f$ being a polynomial of degree at most $2n-1$ when
$\phin$ is the composition of a linear function and a Legendre polynomial.
\begin{theorem}
Let $f\in C^{n-1}([a,b])$.  Write
\begin{eqnarray}
\intab f(x)\,dx & = & \frac{(-1)^n}{n!}\sum_{k=0}^{n-1}(-1)^{n-k-1}\left[
\phi_n^{(n-k-1)}(a)f^{(k)}(a)- \phi_n^{(n-k-1)}(b)f^{(k)}(b)\right]\notag\\
 & & \qquad+E_n(f),\label{basicexact}
\end{eqnarray}
where
$E_n(f)=\frac{(-1)^n}{n!}\intab f^{(n)}(x)\phi_n(x)\,dx$ and $\phin\in\polyn$.
(a)  If $f$ is a polynomial of degree at most $n-1$ then \eqref{basicexact} 
is exact ($E_n(f)=0$) for all $\phin\in\polyn$.
(b)  If $f$ is a polynomial of degree at most $2n-1$ then \eqref{basicexact} 
is exact ($E_n(f)=0$) 
if and only if $$
\phin(x)=\frac{(b-a)^n(n!)^2}{(2n!)}P_n\left(\frac{2x-a-b}{b-a}\right),
$$
where $P_n$ is a Legendre polynomial.
\end{theorem}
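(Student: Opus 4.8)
The plan is to treat the two parts separately, with part (a) being essentially immediate and part (b) reducing to a clean orthogonality statement. For (a), if $f$ is a polynomial of degree at most $n-1$ then $\fdn\equiv 0$, so the integrand in $E_n(f)=\frac{(-1)^n}{n!}\intab\fdn(x)\phin(x)\,dx$ vanishes identically; hence $E_n(f)=0$ for \emph{every} $\phin\in\polyn$, with no further argument needed.

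For part (b), the crucial observation I would exploit is that the map $f\mapsto\fdn$ carries the polynomials of degree at most $2n-1$ \emph{onto} the polynomials of degree at most $n-1$. Indeed, if $\deg f=d$ with $n\leq d\leq 2n-1$ then $\deg\fdn=d-n\in\{0,\dots,n-1\}$, and conversely every polynomial of degree at most $n-1$ is the $n$th derivative of one of degree at most $2n-1$. Since part (a) already disposes of the degrees below $n$ (those impose no condition on $\phin$), I conclude that \eqref{basicexact} is exact for all $f$ of degree at most $2n-1$ if and only if $\intab g(x)\phin(x)\,dx=0$ for every polynomial $g$ of degree at most $n-1$; that is, if and only if $\phin$ is orthogonal on $[a,b]$, with respect to Lebesgue measure, to all polynomials of lower degree. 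This single equivalence gives both directions of the ``if and only if'' at once.

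Next I would show this orthogonality condition determines $\phin$ uniquely in $\polyn$. If $\phin$ and $\psi$ are both monic of degree $n$ and both orthogonal to all polynomials of degree at most $n-1$, then $\phin-\psi$ has degree at most $n-1$, so $\intab(\phin-\psi)^2\,dx=\intab(\phin-\psi)\phin\,dx-\intab(\phin-\psi)\psi\,dx=0$, forcing $\phin=\psi$. It therefore suffices to verify that the stated polynomial has the orthogonality property and is monic. Using the scaling $\phin(x)=[(b-a)/2]^n\phint([2x-a-b]/[b-a])$ from the proof of Theorem~\ref{theoremapprox}, the change of variables $t=(2x-a-b)/(b-a)$ transports orthogonality on $[a,b]$ to orthogonality on $[-1,1]$, where the monic polynomial orthogonal to all lower degrees is $\frac{2^n(n!)^2}{(2n)!}P_n$ (the leading coefficient of $P_n$ being $\frac{(2n)!}{2^n(n!)^2}$). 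Tracking the $[(b-a)/2]^n$ factor then yields exactly $\phin(x)=\frac{(b-a)^n(n!)^2}{(2n)!}P_n((2x-a-b)/(b-a))$.

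I do not expect a genuine obstacle: once the surjectivity of $f\mapsto\fdn$ onto the degree-$\leq n-1$ polynomials is noticed, the argument is purely structural, resting on the standard uniqueness of orthogonal polynomials. The only places demanding care are confirming that the claimed normalization is truly monic (combining the leading coefficient of $P_n$ with the scaling factor) and that the linear change of variables preserves Legendre orthogonality; both are routine computations of the same flavor as those already carried out in Corollary~\ref{corollarytwo}.
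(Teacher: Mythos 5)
Your proof is correct and follows the same core reduction as the paper: both arguments observe that exactness for all $f$ of degree at most $2n-1$ is equivalent to the vanishing of the moments $\intab x^j\phin(x)\,dx$ for $0\leq j\leq n-1$ (the paper gets there by writing $f=\sum_{i=n}^{2n-1}A_ix^i$ and invoking linear independence of the coefficients; your surjectivity observation about $f\mapsto\fdn$ is the same fact stated more structurally), and both then identify the unique monic solution as the normalized Legendre polynomial, transported to $[a,b]$ by the linear change of variables. Where you genuinely diverge is the uniqueness step: the paper expands a hypothetical competitor $\psi_n$ in the Legendre basis and kills the coefficients $B_0,B_1,\dots,B_{n-1}$ successively by testing against $x^j$ (a passage whose write-up in the paper even contains an apparent slip --- ``So, $B_0\not=0$'' should read $B_0=0$), whereas you use the self-contained trick that if $\phin,\psi\in\polyn$ are both orthogonal to all lower degrees then $\phin-\psi$ has degree at most $n-1$ and $\intab(\phin-\psi)^2\,dx=0$. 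Your version is shorter, avoids any appeal to the Legendre expansion or to the nonvanishing of $\intone x^mP_m(x)\,dx$, and proves uniqueness of the orthogonality property abstractly before verifying that the stated polynomial has it; the paper's version, by contrast, makes the Legendre basis do the work and so keeps the computation explicit. Your closing normalization check (leading coefficient $(2n)!/(2^n(n!)^2)$ of $P_n$ combined with the $[(b-a)/2]^n$ scaling) correctly reproduces the stated $\phin$, confirming that the ``$(2n!)$'' in the theorem statement should be read as $(2n)!$.
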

\begin{proof}
(a) Integrate by parts.  (b) First consider the interval $[-1,1]$.  By
(a) and linearity we need only consider $f(x)=\sum_{i=n}^{2n-1}A_ix^i$
for real numbers $A_i$.  Let $\phi\in\polyn$.  We then require
$$
(-1)^nn!E_n(f)  =  \sum_{i=n}^{2n-1}A_ii(i-1)\cdots(i-n+1)\int_{-1}^1 x^{i-n}
\phin(x)\,dx=0.
$$
But the terms $A_ii(i-1)\cdots(i-n+1)$ are linearly independent so we
require the moments $\int_{-1}^1 x^{j}
\phin(x)\,dx$ to vanish for $0\leq j\leq n-1$.  The Legendre polynomial
$P_n$ satisfies this integral condition \cite[7.222.1]{gradshteyn}.
Legendre polynomials are orthogonal with respect to the inner product
$\langle g,h\rangle=\int_{-1}^1f(x)g(x)\,dx$.
If there was another polynomial $\psi_n\in\polyn$ that also satisfied
the moment condition then it could be expanded as $\psi_n(x)=
\sum_{k=0}^nB_kP_k(x)$ where $B_k=\langle \psi_n,P_k\rangle/\langle P_k,
P_k\rangle$.  The moment condition gives
$$
0=\int_{-1}^1x^j\psi_n(x)\,dx=\sum_{k=0}^jB_k\int_{-1}^1x^jP_k(x)\,dx
$$
for each $0\leq j\leq n-1$.   Putting $j=0$ gives 
$0=B_0\int_{-1}^1P_0(x)\,dx$.  But 
$\int_{-1}^1x^mP_m(x)\,dx=(m!)^22^{m+1}/(2m+1)!\not=0$ 
\cite[7.224.3]{gradshteyn}.  So, $B_0\not=0$.  Successively
putting $j=1,2,3,\ldots$ now shows
$B_k=0$ for each $0\leq k\leq n-1$.  Hence,
$\psi_n$ is a multiple of $P_n$.
A linear transformation gives the required polynomial on $[a,b]$.
\end{proof}

\end{document}